\numberwithin{equation}{section}
\pgfplotsset{width=10cm,compat=1.9}
\newcommand*\indic[1]{\mathbbm{1}_{\{ #1 \}}}
\def\cL{{\mathcal L}}
\def\cP{{\mathcal P}}
\def\E{\mathbb{E}}
\def\N{\mathbb{N}}
\def\P{\mathbb{P}}
\def\R{\mathbb{R}}
\def\T{\mathbb{T}}
\def\d{\mathrm{d}}
\providecommand{\norm}[1]{\lVert#1\rVert}
\theoremstyle{plain}
\newtheorem{theorem}{Theorem}[section]
\newtheorem{lemma}[theorem]{Lemma}
\newtheorem{proposition}[theorem]{Proposition}
\newtheorem{remark}[theorem]{Remark}
\date{\vspace{-1em}\normalsize{\today}}
\title{Renewal theorems in a periodic environment}
\author{Quentin Cormier\footnote{Inria, CMAP, École polytechnique, Institut Polytechnique de Paris. Email:
{\tt quentin.cormier@inria.fr}.}}
 \date{\today}
\begin{document}
\maketitle
\vspace{5pt}

\abstract{
	We study a renewal problem within a periodic environment, departing from the classical renewal theory by relaxing the assumption of independent and identically distributed inter-arrival times. Instead, the conditional distribution of the next arrival time, given the current one, is governed by a periodic kernel, denoted as $H$. The periodicity property of $H$ is expressed as $\mathbb{P}(T_{k+1} > t ~ |~ T_k) = H(t, T_k)$, where $H(t+T,s+T) = H(t, s)$.
	For a fixed time $t$, we define $N_t$ as the count of events occurring up to time $t$. The focus is on two temporal aspects: $Y_t$, the time elapsed since the last event, and $X_t$, the time until the next event occurs, given by $Y_t = t - T_{N_t}$ and $X_t = T_{N_{t}+1} - t$. The study explores the long-term behavior of the distributions of $X_t$ and $Y_t$.
}

\vspace{10pt}
\noindent\textbf{Keywords:} Renewal processes, Volterra integral equations 
\smallskip\newline
\noindent\textbf{Mathematics Subject Classification:}  60K05, 45D05
\vspace{10pt}

\section{Introduction}
We are interested in a renewal problem in a periodic environment.
We consider $(T_k)_{k \geq 0}$, a sequence of increasing random variables modeling the successive arrival times of events.
Contrary to the classical renewal theory, we do not assume that the inter-arrival times 
$(T_{k+1}-T_k)_{k \geq 1}$ are 
independent and identically distributed random variables. Instead, 
we assume that the law of $T_{k+1}$ given $T_k$ is determined by a certain \textit{periodic} kernel $H(t, u)$:
\[  \forall k \geq 0, \quad \P(T_{k+1} > t \,|\, T_{k}) = H(t, T_k). \]
The kernel $H$ is \textit{periodic} in the sense that there exists a constant $T>0$ (the period) with
	\[ H(t, u) = H(t + T, u+T), \quad \forall t \geq u. \]
We assume that for all $u \in \R$, the function $t \mapsto H(t, u) \in C^1([u, \infty))$. We denote by  $K$ the associated density:
\begin{equation}
	\label{eq:def of K}
K(t,u) := - \frac{\d}{\d t} H(t, u). 
\end{equation}
One easily checks that $K$ is also $T$-periodic: $K(t+T,u+T) = K(t, u)$.
We write
\begin{equation}
	\label{eq:lambda t s}
	\lambda(t, u) := \frac{K(t, u)}{H(t, u)}. 
\end{equation}
By the previous assumption, $t \mapsto \lambda(t, u) \in C([u, \infty))$.
We assume that there exists two constants $0 < \lambda_{\min} < \lambda_{\max} < \infty$ such that
\[ \forall t \geq u, \quad  \lambda_{\min} \leq \lambda(t, u) \leq \lambda_{\max}. \]
By convention, we assume that $T_0 = s$ for some $s \in \R$ and that the first event occurs at time $T_1$. 
For $t \geq s$, we let $N^s_t$ be the number of events up to time $t$:
\[ N^s_t := \sup\{k \geq 0:  T_k \leq t \}. \]
We are interested by three quantities. The first one is the forward recurrent time, denoted $X^s_t$. This is the time we have to wait after $t$ to observe an event:
\[ X^s_t := T_{N^s_t + 1} - t. \]
The second one is the backward recurrent time $Y^s_t$, this is the time passed since the last event (or the age of the current component):
\[ Y^s_t := t - T_{N^s_t}. \]
Finally, we are interested by the instantaneous arrival time of events, that is:
\[ r(t, s) := \lim_{h \downarrow 0} \frac{1}{h} \P( N^s_{t+h} - N^s_t > 0). \] 
The goal of this work is to characterize the law of $X^s_t$ and $Y^s_t$, as well as the behavior of $r(t, s)$, for large $t$.

In the classical renewal theory,  the inter-arrival times are assumed to be independent and identically distributed. This corresponds here to a kernel $K$ satisfying $K(t, u) = \tilde{K}(t - u)$ for some density probability function $\tilde{K}$. Many different arguments have been developed in this setting, including direct probabilistic arguments, Volterra integral equations of convolution type or PDE techniques. We refer to \cite{zbMATH03168165, zbMATH01929532} for an in depth treatment of this classical problem. 
The periodic setting is studied in \cite{zbMATH03903308} by first extending the classical theory of renewal equations to ordered Banach spaces. The shape of the periodic asymptotic distributions of the backward process $(Y^s_t)$ is studied in \cite{zbMATH04075781}.
In \cite{Mischler05} and \cite{zbMATH07181472}, the authors studied the age structured PDE satisfied by the density of $Y^s_t$. They show existence of a family of $T$-periodic asymptotic profiles using the Krein–Rutman theorem, and prove the convergence at exponential rate using entropy inequalities \cite{Mischler05} or Harris's reccurrence theorem \cite{zbMATH07181472}. Theorem~\ref{th:backward} below summarize these results; we also provide the proof for completeness. 
To the best of our knowledge, Theorem~\ref{th:forward} is new. In particular, our main contribution is to identify the relevant asymptotic probability distributions and to provide a probabilistic interpretation of its shape. Finally, we mention an application of this work in neurosciences: consider a neuron of the type ``Integrate-and-Fire'' subject to a time-periodic input current. 
The statistics of the arrival times of the spikes of this neuron follow the structure described here, see \cite{CTV2}.

\section{Main results}
We first introduce some notations. 
Denote by $\T := \frac{\R}{T \mathbb{Z}}$ the circle of length $T$. For $t \in \R$, we write $t [T] \in \T$ for the value of $t$ modulo $T$.
We consider the following integral equations with unknown $\rho \in L^2(\T)$:
\begin{subequations}
	\label{eq:global-rho}
\begin{equation} 
	\label{eq:integral eq of rho:K}
	\rho(t) = \int_{-\infty}^t{ K(t, u) \rho(u) \d u },  \quad t \in \mathbb{R},
\end{equation}
\begin{equation}
	\label{eq:integral eq normalization rho:H}
	1 = \int_{-\infty}^t{ H(t, u) \rho(u) \d u}, \quad t \in \mathbb{R}.
\end{equation}
\end{subequations}
\begin{proposition}
	There exists a unique function $\rho \in L^1(\T)$ satisfying \eqref{eq:global-rho} for all $t \in \mathbb{R}$. Moreover, it holds that $\rho \in C(\T)$ and for all $t$,  $\lambda_{\min} \leq \rho(t) \leq \lambda_{\max}$.
\end{proposition}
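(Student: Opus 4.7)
The strategy is to exploit periodicity to recast \eqref{eq:integral eq of rho:K} as a fixed-point equation for a compact positive operator on $\T$, apply Krein--Rutman to obtain a unique strictly positive eigenfunction with eigenvalue one, and then use \eqref{eq:integral eq normalization rho:H} to fix the normalization. Concretely, I identify any $T$-periodic $\rho$ on $\R$ with a function on $\T$ and define $(\cP \rho)(t) := \int_{-\infty}^t K(t, u) \rho(u) \, \d u$, which preserves periodicity by the joint periodicity of $K$. Folding the integration domain into intervals of length $T$ expresses $\cP$ as an integral operator on $[0, T]$ with kernel
\[
G(t, u) = \sum_{k \geq 0} K(t + kT, u) \indic{t + kT \geq u}.
\]
Using the explicit formula $H(t, u) = \exp\bigl(-\int_u^t \lambda(s, u) \, \d s\bigr)$, one sees $K(t, u) \leq \lambda_{\max} e^{-\lambda_{\min}(t - u)}$, which yields a uniform bound on $G$ on $[0, T]^2$, while the strict positivity of $\lambda$ gives $G > 0$ everywhere. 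Hence $\cP$ is a Hilbert--Schmidt (in particular compact) operator on $L^2(\T)$ with a strictly positive kernel.

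Applying Krein--Rutman then guarantees that the spectral radius $r(\cP) > 0$ is an algebraically simple eigenvalue carrying a strictly positive eigenfunction, unique up to scalar multiples. To identify $r(\cP) = 1$, I compute the action of the adjoint on the constant function:
\[
(\cP^* \mathbf{1})(u) = \int_0^T G(t, u) \, \d t = \int_u^{\infty} K(s, u) \, \d s = 1 - \lim_{s \to \infty} H(s, u) = 1,
\]
where the last equality uses $H(s, u) \leq e^{-\lambda_{\min}(s - u)} \to 0$ as $s \to \infty$. Thus $\mathbf{1}$ is a strictly positive eigenfunction of $\cP^*$ with eigenvalue $1$, and a second application of Krein--Rutman (to $\cP^*$) forces $r(\cP^*) = 1$, hence $r(\cP) = 1$. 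This produces a strictly positive $\rho$ solving \eqref{eq:integral eq of rho:K}, unique up to a positive multiplicative constant; any candidate $L^1(\T)$ solution is automatically in $L^\infty$ via the fixed-point identity (since $G$ is bounded), so it is covered by this theory.

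To finish, I differentiate $\phi(t) := \int_{-\infty}^t H(t, u) \rho(u) \, \d u$: using $H(t, t) = 1$ and $\partial_t H = -K$, one finds $\phi' \equiv 0$ whenever $\rho$ satisfies \eqref{eq:integral eq of rho:K}; since $\rho, H > 0$, $\phi$ is a strictly positive constant, and rescaling $\rho$ by $1/\phi$ yields the unique solution of the full coupled system. The pointwise bounds then follow by substituting $K = \lambda H$ in \eqref{eq:integral eq of rho:K} and using \eqref{eq:integral eq normalization rho:H} to recognise $\rho(t)$ as the expectation of $\lambda(t, U)$ under the probability density $u \mapsto H(t, u) \rho(u) \indic{u \leq t}$, which forces $\lambda_{\min} \leq \rho(t) \leq \lambda_{\max}$. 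Continuity of $\rho$ on $\T$ is then a consequence of these bounds, continuity of $K$ in its first argument, and dominated convergence (controlled by the exponential decay of $K$). The delicate step is the Krein--Rutman argument identifying $r(\cP) = 1$; the rest is routine periodicity and integration bookkeeping.
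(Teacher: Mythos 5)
Your proof is correct but takes a genuinely different route from the paper. The paper reads \eqref{eq:integral eq of rho:K} as the invariance equation for the Markov chain of phases $\Phi_k = T_k[T]$, whose transition kernel on $\T$ is exactly your folded kernel $G$ (there denoted $K^T$). It then verifies a Doeblin minorization ($K^T \geq \delta > 0$ uniformly), which yields existence, uniqueness and continuity of the invariant density by total-variation contraction, and a short contraction estimate on $x = K^\T(x)$ shows any $L^1(\T)$ fixed point has constant sign — no compactness or spectral theory needed. You instead treat $\cP$ as a compact positive integral operator and apply Krein--Rutman, pinning down the spectral radius via $\cP^*\mathbf{1} = \mathbf{1}$ (the row-sum-one property); that is a clean trick making $r(\cP) = 1$ transparent, where the paper obtains it implicitly from stochasticity of the transition kernel. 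Both are Perron--Frobenius arguments at heart. The paper's Doeblin route is more elementary and directly produces the quantitative geometric rate that is re-used in the proofs of Theorems~\ref{th:backward} and~\ref{th:forward}; your spectral route is more structural but needs additional work to become quantitative. Your treatment of the normalization (differentiating $t \mapsto \int_{-\infty}^t H(t,u)\rho(u)\,\d u$ to see it is a strictly positive constant) and of the pointwise bounds (writing $\rho(t)$ as an average of $\lambda(t,\cdot)$ against the probability density $H(t,\cdot)\rho$) fills in details the paper leaves to the reader, and is correct.

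One technical caveat: in $L^2(\T)$ the positive cone has empty interior, so the classical Krein--Rutman theorem does not apply as stated. You should either invoke Jentzsch's theorem (the $L^p$ Perron--Frobenius theorem for compact integral operators with a.e.\ strictly positive kernel), or observe that the continuous bounded kernel $G$ makes $\cP$ map $L^1(\T)$ into $C(\T)$, so that one may work in $C(\T)$ where the cone of nonnegative functions has nonempty interior. Either fix is routine and preserves the rest of your argument.
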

The proof of this result is postponed in Section~\ref{sec:markov-chain-phases}.
The idea is to consider the Markov chain $(\Phi_k)_{k \geq 0}$ where $\Phi_k := T_k [T]$ is the \textit{phase} of $T_k$.
Then, one proves that the solution $\rho$ is proportional to the unique invariant probability measure of this Markov chain.

For $u \geq 0$ and $\phi, \varphi \in \R$, we set:
\begin{align}
\tilde{\nu}_\infty(u, \phi) &:= \frac{1}{T} \rho(\phi) H(u+\phi, \phi), 	\label{eq: def of nu infty phi and t} \\
\nu_\infty^\varphi(u) &:= T \tilde{\nu}_\infty(u, \varphi-u) = \rho(\varphi-u) H(\varphi, \varphi-u), 	\label{eq:def of nu infty varphi}
\end{align}
and
\begin{align}
	\tilde{\mu}_\infty(u, \phi) &:= \frac{1}{T} \int_{-\infty}^{\phi-u} { K(\phi, v) \rho(v) \d v}, \label{eq:def of mu infty phi t} \\
	\mu_\infty^\varphi(u) &:= T \tilde{\mu}_\infty(u, \varphi + u) = \int_{-\infty}^\varphi{ K(\varphi+u, v) \rho(v) \d v}. \label{eq:def of mu infty varphi}
\end{align}
\begin{lemma} 
	It holds that $\tilde{\nu}_\infty$ and $\tilde{\mu}_\infty$ are probability measures on $\R_+ \times \T$. Moreover, $\nu^\varphi_\infty$ and $\mu^\varphi_\infty$ are probability measures on $\R_+$ and $\nu^{\varphi+T}_\infty = \nu^\varphi_\infty$, $\mu^{\varphi+T}_\infty = \mu^\varphi_\infty$.
\end{lemma}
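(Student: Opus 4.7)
The plan is first to establish the statements for the one-parameter families $\nu_\infty^\varphi$ and $\mu_\infty^\varphi$ on $\R_+$, and then to deduce the two-dimensional assertions for $\tilde{\nu}_\infty$ and $\tilde{\mu}_\infty$ on $\R_+\times\T$ via the direct identities $\tilde{\nu}_\infty(u,\phi) = \frac{1}{T}\nu_\infty^{\phi+u}(u)$ and $\tilde{\mu}_\infty(u,\phi) = \frac{1}{T}\mu_\infty^{\phi-u}(u)$. Positivity everywhere is immediate from $\rho, H, K \geq 0$, so in each case only the normalization and the $T$-periodicity in $\varphi$ remain to be checked.

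For $\nu_\infty^\varphi$, I would change variables $w = \varphi - u$ to rewrite $\int_0^\infty \nu_\infty^\varphi(u)\,\d u = \int_{-\infty}^\varphi \rho(w) H(\varphi, w)\,\d w$, which equals $1$ by equation \eqref{eq:integral eq normalization rho:H} applied at $t = \varphi$. The periodicity $\nu_\infty^{\varphi + T} = \nu_\infty^\varphi$ then follows at once from $T$-periodicity of $\rho$ (from the previous proposition) and from the relation $H(\varphi + T, \varphi + T - u) = H(\varphi, \varphi - u)$.

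For $\mu_\infty^\varphi$, Fubini gives $\int_0^\infty \mu_\infty^\varphi(u)\,\d u = \int_{-\infty}^\varphi \rho(v) \bigl(\int_\varphi^\infty K(s,v)\,\d s\bigr)\d v$, and the inner integral equals $H(\varphi, v)$ once one knows that $\lim_{t\to\infty} H(t,v) = 0$. This is the only analytical input of the proof beyond bookkeeping: it follows from $\lambda \geq \lambda_{\min}$, which integrates to the exponential bound $H(t,v) \leq e^{-\lambda_{\min}(t-v)}$. Plugging back, \eqref{eq:integral eq normalization rho:H} again yields $\int_0^\infty \mu_\infty^\varphi(u)\,\d u = 1$. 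The periodicity $\mu_\infty^{\varphi+T} = \mu_\infty^\varphi$ is obtained by substituting $v \mapsto v - T$ in the defining integral and invoking the $T$-periodicity of $K$ and $\rho$.

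Finally, to conclude for the two-dimensional measures, I would integrate the reparametrization identities in $\phi$ over $[0,T)$ and apply Fubini. For fixed $u$, the substitution $\varphi = \phi + u$ (respectively $\varphi = \phi - u$) combined with the $T$-periodicity just proved turns the inner $\phi$-integral into $\int_0^T \nu_\infty^\varphi(u)\,\d\varphi$ (resp.\ $\int_0^T \mu_\infty^\varphi(u)\,\d\varphi$); normalization on $\R_+$ then cancels with the $\frac{1}{T}$ prefactor, giving total mass $1$ on $\R_+ \times \T$. The main obstacle is really the single analytic step of showing $H \to 0$ at infinity; everything else is substitution and Fubini.
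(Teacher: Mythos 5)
Your proof is correct and takes essentially the same route as the paper: normalize the one-parameter families on $\R_+$ by reducing to \eqref{eq:integral eq normalization rho:H} (via Fubini and $\int_\varphi^\infty K(s,v)\,\d s = H(\varphi,v)$ for $\mu^\varphi_\infty$, via the direct change of variable for $\nu^\varphi_\infty$), then integrate in the phase variable and use $T$-periodicity to pass to $\R_+\times\T$. The only difference is that you make explicit the step $\lim_{t\to\infty}H(t,v)=0$ (via $H(t,v)\le e^{-\lambda_{\min}(t-v)}$), which the paper uses implicitly when invoking \eqref{eq:def of K}.
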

\begin{proof}
	Using Fubini and eq.~\eqref{eq:def of K}, 
\[		\int_0^\infty{ \mu_\infty^\varphi(t) \d t} = \int_0^\infty \int_{-\infty}^{\varphi} K(\varphi+t, u)\rho(u) \d u
							 = \int_{-\infty}^\varphi{ H(\varphi, u) \rho(u) \d u} 
							 = 1.
\]
In addition, using $\mu^\varphi_\infty(t) = T \tilde{\mu}_\infty(\varphi+t, t)$ and Fubini,
\[ 1 = \int_0^\infty{ \int_0^T \tilde{\mu}_\infty(\varphi+t, t) \d \varphi \d t} = \int_0^\infty{ \int_t^{T+t} \tilde{\mu}_\infty(\theta, t) \d \theta \d t}. \]
Note that $\phi \mapsto \tilde{\mu}_\infty(\phi, t)$ is $T$ periodic.
Therefore,  $\int_t^{T+t} \tilde{\mu}_\infty(\theta, t) \d \theta = \int_0^{T} \tilde{\mu}_\infty(\theta, t) \d \theta $. Altogether,	$\int_0^\infty \int_0^T \tilde{\mu}_\infty(\theta, t) \d \theta \d t = 1$. Similar arguments hold for $\nu^\varphi_\infty$ and $\tilde{\nu}_\infty$. 
\end{proof}
We denote by $d_{TV}(\nu, \mu)$ the standard total variation distance between two probability measures supported on $\R_+$.
We first recall the asymptotic results for $(Y^s_t)$ and $r(t, s)$, see \cite{Mischler05} and \cite{zbMATH07181472}.
\begin{theorem}
	\label{th:backward}
	There exists two constants $C, c > 0$ such that
	\begin{enumerate}
		\item For all $t \geq s$, $d_{TV}(\mathcal{L}(Y^s_t), \nu^t_\infty) \leq C e^{-c (t-s)}$.
		\item For all $t \geq s$, $|r(t, s) - \rho(t)| \leq C \lambda_{\max} e^{-c (t-s)}$.
		\item For all $a > 0$ and all $t \geq s$, $\left| \E (N^s_{t+a} - N^s_t) - \int_t^{t+a} \rho(u) \d u \right| \leq \frac{C \lambda_{\max}}{c} e^{-c(t-s)}$.
	\end{enumerate}
\end{theorem}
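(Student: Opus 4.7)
I first observe that parts~2 and~3 follow readily from part~1. By conditioning on the last renewal $T_{N^s_t}$,
\[ r(t, s) = \lim_{h \downarrow 0} \frac{1}{h} \E\bigl[\P(T_{N^s_t+1} \leq t+h \mid T_{N^s_t})\bigr] = \E[\lambda(t, T_{N^s_t})] = \int_0^\infty \lambda(t, t-u)\, \mathcal{L}(Y^s_t)(\d u). \]
The change of variable $v = t-u$ in the fixed-point equation~\eqref{eq:integral eq of rho:K} shows that $\int_0^\infty \lambda(t, t-u)\, \nu^t_\infty(u)\, \d u = \int_{-\infty}^t K(t, v)\rho(v)\, \d v = \rho(t)$. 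Since $\lambda \leq \lambda_{\max}$, part~1 yields $|r(t,s)-\rho(t)| \leq \lambda_{\max}\, d_{TV}(\mathcal{L}(Y^s_t), \nu^t_\infty)$, which is part~2. Part~3 then follows from $\E(N^s_{t+a}-N^s_t) = \int_t^{t+a} r(u, s)\, \d u$ by integrating the bound from part~2.

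\textbf{Plan for part~1.} I would exploit the Markov chain of phases $(\Phi_k)_{k\geq 0} := (T_k\,[T])$ from Section~\ref{sec:markov-chain-phases}. Its transition kernel $Q(\phi, \d\psi)$ has a continuous density $q(\phi, \psi)$ that admits the uniform Doeblin lower bound
\[ q(\phi, \psi) \geq \lambda_{\min} e^{-\lambda_{\max} T}, \qquad \phi, \psi \in \T, \]
because any target phase can be reached by an interarrival time of length at most $T$, and the corresponding density $\tau \mapsto K(\phi+\tau, \phi)$ is bounded below by $\lambda_{\min} e^{-\lambda_{\max} T}$ on $(0, T]$. By Doeblin's theorem, $Q^n$ contracts geometrically to its invariant law $\pi$ in total variation. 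Writing $p_k$ for the density of $T_k$ (with $p_0 = \delta_s$), the density of $Y^s_t$ at $u \in (0, t-s)$ equals $H(t, t-u)\, M_s(t-u)$ with $M_s(v) := \sum_{k\geq 1} p_k(v)$, while $\nu^t_\infty(u) = H(t, t-u)\rho(t-u)$. It therefore suffices to prove $|M_s(v)-\rho(v)| \leq C e^{-c(v-s)}$ uniformly in $v > s$; the residual mass at $u \geq t-s$ is controlled by $H(t, s) \leq e^{-\lambda_{\min}(t-s)}$.

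\textbf{Main obstacle.} The technical heart is the passage from the discrete-time geometric ergodicity of $(\Phi_k)$ to the continuous-time estimate $|M_s(v)-\rho(v)| \leq C e^{-c(v-s)}$. The approach I would take is to write the Volterra equations $M_s(v) = K(v, s) + \int_s^v K(v, u) M_s(u)\, \d u$ and $\rho(v) = \int_{-\infty}^v K(v, u)\rho(u)\, \d u$, and iterate their difference: after conditioning on the most recent arrival before $v$, the resulting convolution kernel is expressible via powers of $Q$ acting on the circle, and the Doeblin minorization then delivers the exponential decay with explicit constants depending only on $\lambda_{\min}$, $\lambda_{\max}$ and $T$. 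An alternative, followed in~\cite{zbMATH07181472}, is to apply Harris's theorem directly to the Markov process $(Y^s_t, t\,[T])$; the same Doeblin lower bound supplies the required small-set condition.
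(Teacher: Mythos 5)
Your reduction of parts~2 and~3 to part~1 is exactly the paper's: $r(t,s) = \E\,\lambda(t,\,t-Y^s_t)$, together with the identity $\int_0^\infty \lambda(t,t-u)\,\nu^t_\infty(\d u) = \rho(t)$ and the bound $\lambda\leq\lambda_{\max}$, gives part~2 from part~1, and integrating the rate gives part~3. That is fine.

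For part~1 there is a genuine gap, and you have not closed it. You reduce part~1 to the claim $|M_s(v)-\rho(v)| \leq Ce^{-c(v-s)}$. But $M_s(v) = \sum_{k\geq 1} p_k(v)$ is precisely the renewal density $r(v,s)$, so this claim \emph{is} part~2 — you have shown parts~1 and~2 are equivalent up to constants, not proved either. The step you flag as the ``technical heart'' — passing from geometric ergodicity of the phase chain $(\Phi_k)$ on $\T$ to a pointwise bound on $M_s(v)-\rho(v)$ at a fixed real time $v$ — is where all the work sits, and your sketch does not carry it out. The Doeblin estimate for $(\Phi_k)$ controls the law of $T_k$ modulo $T$ only; $M_s(v)$ at a fixed $v$ involves the joint law of the phase $\Phi_k$ and the winding number $p_k$, and knowing that the marginal of the phase converges fast says nothing by itself about how mass spreads over windings. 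Saying ``the resulting convolution kernel is expressible via powers of $Q$'' does not resolve this: the iterated Volterra kernel lives on the line, not the circle, and projecting to $\T$ discards exactly the information you need. Making this route rigorous would amount to a periodic renewal theorem in its own right.

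The alternative you mention in one sentence — Harris/Doeblin for the age process together with its phase — is essentially the paper's route, although the paper works in discrete time: it samples at period-spaced instants, sets $Y^\varphi_n := Y^s_{nT+\varphi}$, shows this is a time-homogeneous Markov chain, identifies $\nu^\varphi_\infty$ as invariant (the key input is Proposition~\ref{prop:invariant-periodic-law-of-Y}, which rests on checking $r^{\nu^s_\infty}(\cdot,s)\equiv\rho$ via uniqueness of the Volterra solution), and gets geometric contraction from the uniform minorization $TK^{\delta_y}(u,\varphi) \geq T\lambda_{\min}e^{-\lambda_{\max}T}$ of that chain's transition kernel. Your own minorization $q(\phi,\psi)\geq\lambda_{\min}e^{-\lambda_{\max}T}$ for the phase chain is correct but applies to the wrong chain; the Doeblin bound must be established for the age chain $Y^\varphi_n$, not for $(\Phi_k)$. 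If you want to complete the proof, flesh out this second route.
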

\begin{remark}
	We can choose $c = -\log(\beta)/T$ and  $C = 2 e^{cT}$, where $\beta := T \lambda_{\min} e^{-T \lambda_{\max}}$.
	The last inequality is a generalization of the key renewal theorem.
\end{remark}
The proof is given in Section~\ref{sec:backward} below for completeness. %
We then describe the long term behavior of the forward reccurence time $(X^s_t)_{t \geq s}$. Let $f(u) := e^{\frac{\lambda_{\min}}{2}u}$ and consider the following weighted total variation distance between two probability measures supported on $\R_+$:
\[  d^f_{\text{TV}}(\nu, \mu) := \int_{\R_+} { f(u) |\nu-\mu|(\d u)}. \]
Our main result is
\begin{theorem}
	\label{th:forward}
	There exists two constants $C, c > 0$ such that 
	\[ \forall t \geq s, \quad d^f_{TV}(\cL(X^s_t), \mu^t_\infty) \leq C e^{-c (t-s) }. \]
\end{theorem}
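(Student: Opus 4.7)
The plan is to reduce the forward-time convergence to the backward-time convergence supplied by Theorem~\ref{th:backward}, by writing both $\cL(X^s_t)$ and $\mu^t_\infty$ as mixtures sharing a common transition kernel, with mixing laws $\cL(Y^s_t)$ and $\nu^t_\infty$ respectively. The exponential weight $f$ will be absorbed by the exponential decay of the conditional density of $X^s_t$ given $Y^s_t$.

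The first step is to identify the conditional law of $X^s_t$ given $Y^s_t = y$. On the event $\{Y^s_t = y\}$, the last event (or, when $y = t - s$, the initial time $T_0 = s$) sits at $t - y$, and the renewal kernel yields
\[
\P(X^s_t > u \mid Y^s_t = y) = \frac{H(t+u, t-y)}{H(t, t-y)} = \exp\Bigl(-\int_t^{t+u} \lambda(\sigma, t-y)\,\d\sigma\Bigr),
\]
so that $X^s_t$ given $Y^s_t = y$ admits the density $p(u;y,t) := K(t+u, t-y)/H(t, t-y)$, and the two-sided bound on $\lambda$ forces $p(u;y,t) \leq \lambda_{\max} e^{-\lambda_{\min} u}$ uniformly in $y,t$.

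The second step is to check the companion mixture identity for $\mu^t_\infty$: substituting $v = t-y$ in \eqref{eq:def of mu infty varphi} and using $\nu^t_\infty(y) = \rho(t-y) H(t, t-y)$ gives
\[
\mu^t_\infty(u) = \int_0^\infty K(t+u, t-y)\, \rho(t-y)\,\d y = \int_0^\infty p(u;y,t)\,\nu^t_\infty(y)\,\d y.
\]
Hence, as signed measures on $\R_+$,
\[
\cL(X^s_t) - \mu^t_\infty = \int_0^\infty p(\cdot;y,t)\,[\cL(Y^s_t) - \nu^t_\infty](\d y),
\]
and the triangle inequality followed by Fubini yields
\[
d^f_{TV}(\cL(X^s_t), \mu^t_\infty) \leq \int_0^\infty \Bigl(\int_0^\infty f(u)\,p(u;y,t)\,\d u\Bigr)\,\bigl|\cL(Y^s_t) - \nu^t_\infty\bigr|(\d y).
\]
With $f(u) = e^{\lambda_{\min} u /2}$, the inner integral is bounded by $\lambda_{\max}\int_0^\infty e^{-\lambda_{\min} u/2}\,\d u = 2\lambda_{\max}/\lambda_{\min}$ independently of $y,t$, and Theorem~\ref{th:backward}(1) closes the estimate.

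I do not anticipate a serious obstacle; the exponent $\lambda_{\min}/2$ in the weight is already tuned exactly so that the inner $du$-integral is finite. The only point requiring care is book-keeping at the atom of $\cL(Y^s_t)$ at $y = t-s$ (corresponding to $\{T_1 > t\}$): the conditional-density formula for $X^s_t$ given $Y^s_t = y$ remains valid at this atom, since it is just the tail of $T_1$ conditioned on $T_1 > t$, so the mixture representation is an exact identity of measures and the whole argument is rigorous.
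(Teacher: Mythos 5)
Your proposal is correct, and it takes a genuinely different route from the paper. The paper proves Theorem~\ref{th:forward} from scratch: it identifies the sampled process $X^\varphi_n := X^s_{nT+\varphi}$ as a Markov chain, computes its transition operator $\cP$ via a Volterra equation, verifies through explicit resolvent computations that $\mu^\varphi_\infty$ solves the invariance equation, and then assembles a Lyapunov function, a small set, and Harris's ergodic theorem to get geometric convergence in the $f$-weighted total variation. You instead \emph{reduce} the forward estimate to the backward estimate already supplied by Theorem~\ref{th:backward}(1). The key observation is that $X^s_t$ given $Y^s_t = y$ has a conditional density $p(u;y,t)=K(t+u,t-y)/H(t,t-y)$ depending only on $(y,t)$ by the Markov structure of $(T_k)$, and that $\mu^t_\infty$ is exactly the push-forward of $\nu^t_\infty$ by the same kernel, as your change of variables $v=t-y$ in \eqref{eq:def of mu infty varphi} combined with \eqref{eq:def of nu infty varphi} shows. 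The uniform bound $p(u;y,t)\le \lambda_{\max}e^{-\lambda_{\min}u}$ then absorbs the weight $f(u)=e^{\lambda_{\min}u/2}$, giving $\int_0^\infty f(u)p(u;y,t)\,\d u\le 2\lambda_{\max}/\lambda_{\min}$ uniformly in $y$ and $t$, and the unweighted total-variation contraction from Theorem~\ref{th:backward}(1) closes the argument. Each claim you make --- the conditional density formula (valid at the atom $y=t-s$ as well), the mixture identity for $\mu^t_\infty$, the exponential tail bound, the Fubini step --- checks out, and there is no circularity since the paper proves Theorem~\ref{th:backward} independently and beforehand. Your approach is shorter and conceptually cleaner for the stated result and even yields the same exponential rate $c$ as in the backward theorem; what it does not deliver, and what the paper's longer route provides as a by-product, is the characterization of $\mu^\varphi_\infty$ as the unique invariant measure of the skeleton chain $(X^\varphi_n)$ together with the explicit integral-equation identities in the lemma preceding the Harris step, which may be of independent interest.
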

The proof is given in Section~\ref{sec:forward}. 

\section{Remarks on the asymptotic distributions}
\label{sec:pdmp}
We now explain using heuristic arguments the expressions of $\nu^\varphi_\infty$ and $\mu^\varphi_\infty$ which appear in the two theorems. We start with $\mu^\varphi_\infty$.
\subsubsection*{Forward Reccurence time}
There is a natural Piecewise Deterministic Markov process (PDMP) associated to the dynamics of the Forward Recurrence time.
Let $(\tilde{X_t}, \tilde{\Phi}_t)$ be the following PDMP:  between the jumps, $\tilde{\Phi}$ is constant and $\tilde{X}$ decays at rate 1 ($\frac{d}{dt} \tilde{X}_t = -1$). When $\tilde{X}$ reaches the value zero, a jump occurs for the two coordinates. The distribution of the jump is given by:
\[  \P(\tilde{X}_t \in \d u, \tilde{\Phi}_t \in \d \psi \, | \, \tilde{X}_{t-} = 0, \tilde{\Phi}_{t-} = \phi) = K(u+\phi, \phi) \delta_{\phi+u}(\d \psi) \d u. \]
The interpretation is the following: $\tilde{\Phi}_t$ is the \textit{phase} of the time next event; while $\tilde{X}_t$ is the time we have to wait before the next event occurs.
The successive jumping times of the PDMP define the sequence of arrival times $(\tilde{T}_k)_{k \geq 1}$.
\begin{proposition}
	The PDMP $(\tilde{X}_t, \tilde{\Phi}_t)_{t \geq 0}$ has a unique invariant probability measure on $\R_+ \times \T$ given by $\tilde{\mu}_\infty(u, \phi) \d u \d \phi$.
\end{proposition}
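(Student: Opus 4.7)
My plan is to verify invariance of $\tilde\mu_\infty$ directly via the extended generator of the PDMP, and then to obtain uniqueness by reducing to the phase chain at jump times, whose uniqueness is guaranteed by the preceding proposition. For a sufficiently smooth $f : \R_+ \times \T \to \R$, the extended generator acts as $\cL f(u,\phi) = -\partial_u f(u,\phi)$, and its natural domain is characterized by the boundary condition
\[ f(0,\phi) = \int_0^\infty f(u', \phi+u')\, K(\phi+u', \phi)\, \d u', \quad \phi \in \T, \]
which expresses that $f(\tilde X_t, \tilde \Phi_t)$ does not jump at jump times. Invariance then reduces to checking that $\int_\T \int_0^\infty \cL f(u,\phi)\,\tilde\mu_\infty(u,\phi)\, \d u\, \d\phi = 0$ for every such $f$.

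The computation rests on two explicit identities: the trace $\tilde\mu_\infty(0,\phi) = \rho(\phi)/T$ (which follows from the defining equation \eqref{eq:integral eq of rho:K} for $\rho$), and, by differentiating \eqref{eq:def of mu infty phi t} in $u$,
\[ \partial_u \tilde\mu_\infty(u,\phi) = -\frac{K(\phi,\phi-u)\,\rho(\phi-u)}{T}. \]
Since $\tilde\mu_\infty$ is an integrable density (by the preceding lemma), $\tilde\mu_\infty(u,\phi) \to 0$ as $u \to \infty$, so integrating by parts in $u$ produces a boundary term at $u=0$ and an interior term. I then plan to substitute the boundary condition on $f$ into the boundary term and apply the change of variables $\psi := \phi + u'$, which is legitimate on $\T$ by the $T$-periodicity of $K$ and $\rho$. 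The two resulting double integrals over $\T \times \R_+$ will match and cancel, yielding invariance.

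For uniqueness, observe that the sequence of phases at jump times $\Psi_k := \tilde T_k\,[T]$ is exactly the Markov chain $(\Phi_k)$ from Section~\ref{sec:markov-chain-phases}: from state $\phi$, the next state is $(\phi+u)[T]$ where $u$ has density $K(\phi+u,\phi)$. By the preceding proposition, this chain has the unique invariant probability measure $\rho(\phi)\,\d\phi/T$. Standard PDMP arguments (Palm inversion applied to the inter-jump dynamics $\dot{\tilde X} = -1$, $\dot{\tilde\Phi}=0$) then reconstruct the invariant law of the full process $(\tilde X,\tilde\Phi)$ unambiguously from that of the embedded chain, giving uniqueness. The main obstacle is the cancellation in the invariance calculation: it relies on $\rho$ solving \eqref{eq:integral eq of rho:K} and on the $T$-periodicity that makes the change of variables well-defined on $\T$. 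Once this cancellation is in place, uniqueness is essentially a transfer of the already-known uniqueness for $(\Phi_k)$.
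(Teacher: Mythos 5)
Your verification of invariance is a genuinely different route from the paper's. The paper works entirely through the embedded jump chain $(\tilde{X}_{\tilde{T}_k},\tilde{\Phi}_{\tilde{T}_k})$: it observes that the joint chain's transition depends only on the phase component, pushes forward the unique invariant measure $\pi$ of the phase chain to get the invariant measure $\mu_{MC}^\infty(\d u,\d\phi)=K(\phi,\phi-u)\pi(\phi-u)\,\d\phi\,\d u$ of the joint chain, and then invokes the result of \cite{costaPDP} to transfer this back to the PDMP, obtaining both the form of $\tilde\mu_\infty$ and its uniqueness in one stroke. You instead verify the stationarity equation $\int\cL f\,\tilde\mu_\infty=0$ directly via the extended generator $\cL f=-\partial_u f$ with its boundary condition at $u=0$. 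Your computation is correct: the trace $\tilde\mu_\infty(0,\phi)=\rho(\phi)/T$ follows from \eqref{eq:integral eq of rho:K}, the $u$-derivative is as you state, and after integrating by parts, substituting the boundary condition, and shifting $\psi=\phi+u'$ on $\T$, the two terms cancel. This buys a more self-contained, purely computational proof of invariance that does not require the PDMP-to-jump-chain transfer theorem for that direction; the price is that you still need the embedded-chain machinery for uniqueness, so \cite{costaPDP} (or equivalent Palm-inversion results) cannot be dispensed with entirely. You should also be explicit that the annihilation condition $\int\cL f\,\tilde\mu_\infty=0$ over the stated domain is sufficient for invariance of this boundary-jump PDMP; this is standard but worth a citation.

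One small inaccuracy: you state that the phase chain has unique invariant probability measure $\rho(\phi)\,\d\phi/T$. This is not quite right. By Proposition~\ref{prop:pi is the unique invariant measure of the phases} and the remark following it, the invariant probability measure is $\pi(\phi)\,\d\phi$ with $\pi=\beta\rho$, where $\beta=\E(\Delta_{k+1})$ is the expected number of periods between events; there is no reason to have $\beta=1/T$ in general, so $\rho/T$ is merely proportional to $\pi$, not equal to it. This slip does not break your uniqueness argument, since you only use uniqueness of the phase chain's invariant measure and not its explicit form, but it should be corrected.
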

\begin{proof}
	We follow the approach of \cite{costaPDP} and consider the associated sampling chain  $(\tilde{\Phi}_{\tilde{T}_k}, \tilde{X}_{\tilde{T}_k})_{k \geq 0}$ of the process just after the jumps. 
	We shall see in Section~\ref{sec:markov-chain-phases} that $(\tilde{\Phi}_{\tilde{T}_k})$ has a unique invariant probability measure on $\T$,  $\pi(\phi) \d \phi$. 
	Therefore, $(\tilde{X}_{\tilde{T}_k}, \tilde{\Phi}_{\tilde{T}_k})_{k \geq 0}$ has a unique invariant probability measure given by:
\[  \mu_{MC}^\infty(\d u, \d \phi) =  K(\phi, \phi-u) \pi(\phi-u) \d \phi \d u. \]
Then, we use the result of \cite{costaPDP} to deduce that $\tilde{\mu}_\infty(u, \phi) \d u \d \phi$ is the unique invariant probability measure of $(\tilde{X}_t, \tilde{\Phi}_t)$.
\end{proof}
The connection between $X^s_t$ and this PDMP is the following.
Assume that $X^s_s$ is a random variable of law $K(u + s, s) \d u$.
Assume that at time $s$, $\tilde{X}_s = X^s_s$ and $\tilde{\Phi}_s = s+X^s_s$. Then the forward process is, for all $t \geq s$, $X^s_t = \tilde{X}_t$.
In addition, it holds that $a.s.,~\tilde{\Phi}_{t} = \tilde{X}_t + s$.
Therefore, $(X^s_{nT+\varphi})_{n}$ is a Markov chain. Indeed, it holds that 
\begin{align*}
	\P(X^s_{(n+1)T+\varphi} \in \cdot \,|\, X^s_{nT+\varphi} = u) &= \P(\tilde{X}_{(n+1)T+\varphi} \in \cdot \,|\, \tilde{X}_{nT+\varphi} = u, \, \tilde{\Phi}_{nT+\phi} = u + \varphi) \\
						  &= P_T(\cdot; (u, u + \varphi)),
\end{align*}
where $P_T(\cdot; (u, \phi))$ is the semi-group of the Markov process $(\tilde{X}_t, \tilde{\Phi}_t)_{t \geq 0}$.

We show formally that $\mu^\varphi_\infty(u) \d u := T \tilde{\mu}_\infty(u, u+\varphi)$ is the invariant probability measure of this Markov chain. 
Indeed, by the Birkhoff's ergodic theorem, we expect that
\begin{align*}
	\tilde{\mu}_\infty(u, \phi) \d u \d \phi & = \lim_{n \rightarrow \infty} \frac{1}{nT} \int_s^{s+nT} \indic{\tilde{X}_v \in \d u, \tilde{\Phi}_v \in \d \phi} \d v \\
					 &= \lim_{n \rightarrow \infty} \frac{1}{nT} \sum_{i = 0}^{n-1}\int_s^{s+T} \indic{\tilde{X}_{iT+v} \in \d u, \tilde{\Phi}_{iT+v} \in \d \phi} \d v \\
					 &= \lim_{n \rightarrow \infty} \frac{1}{nT} \sum_{i = 0}^{n-1} \indic{\tilde{X}_{iT+\phi-u} \in \d u} \d \phi = \frac{1}{T} \mu^{\phi-u}_\infty(u) \d u \d \phi.
\end{align*}
To obtain the third equality from the second one, we used that, provided that $\tilde{X}_{iT + s} = u$, we necessarily have $\tilde{\Phi}_{iT+s} = s+u$, or equivalently $s = \phi - u$. Hence, the integral simplifies.
Choosing $\varphi = \phi - u$, we find that $\mu^\varphi_\infty (u) \d u = T \tilde{\mu}_\infty(u, u+ \varphi) \d u$ as claimed.
Once again, this argument is only an heuristic: we give in Section~\ref{sec:forward} a complete proof that $\mu^\varphi_\infty$ is the unique invariant probability measure of $(X^s_{nT+\varphi})_{n \geq 0}$.
\subsubsection*{Backward recurrence time}
Similarly, consider the PDMP $(\tilde{Y}_t, \tilde{\Psi}_t)$ on $\R_+ \times \T$ defined as follows: between the jumps, $\tilde{\Psi}$ is constant and $\tilde{Y}$ grows at rate $1$ ($\frac{d}{dt} \tilde{Y}_t = 1$). Jumps occurs at rate $\lambda(\tilde{Y}_{t-} + \tilde{\Psi}_{t-}, \tilde{\Psi}_{t-})$. When a jump occurs, say at time $\tau$, it holds that $\tilde{Y}_\tau = 0$ and $\tilde{\Psi}_{\tau} = \tau$. The interpretation is the following: $\tilde{Y}_t$ is the time since the last event, while $\tilde{\Psi}_t$ is the phase corresponding to the time of the last event.
As previously, we can prove that $\tilde{\nu}_\infty$ is the unique invariant probability measure of $(\tilde{Y}_t, \tilde{\Psi}_t)$. 
In addition, $(Y^s_{nT+\varphi})_{n}$ is a Markov chain, and by a similar heuristic argument based on the Birkhoff's theorem, one expects $\nu^\varphi_\infty$ to be its invariant probability measure. This fact is proven rigorously in Section~\ref{sec:backward} below.
\section{The Markov chain of the phases}
\label{sec:markov-chain-phases}
For all $t, s \in [0, T]$, let $K^T(t, s) := \sum_{i \geq 0} K(t+i T, s)$.
We consider the phase of the $k$-th arrival time:
\[ \Phi_k := T_k [T]. \]
\begin{lemma}
	$(\Phi_k)_{k \geq 1}$ is a Markov chain on $\T$ with a transition kernel given by:
	\[ \P(\Phi_{k+1} \in \d t \,|\, \Phi_k) = \sum_{i \geq 0} K(t + iT, \Phi_k) \d t = K^T(t, \Phi_k) \d t. \]
\end{lemma}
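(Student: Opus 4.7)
The plan is to derive both the Markov property and the explicit form of the transition kernel by an explicit computation of the conditional law of $\Phi_{k+1}$ given $\mathcal{F}_k := \sigma(T_0,\ldots,T_k)$. The starting point is the standing assumption that the conditional distribution of $T_{k+1}$ given $\mathcal{F}_k$ coincides with the conditional distribution given $T_k$, and has density $u \mapsto K(u, T_k)$ on $[T_k, \infty)$. So for any bounded measurable $f : \T \to \R$,
\[ \E[f(\Phi_{k+1}) \mid \mathcal{F}_k] = \int_{T_k}^\infty f(u\,[T])\, K(u, T_k)\, \d u. \]

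The next step is to reduce this integral to a quantity depending on $T_k$ only through $\Phi_k$. Let $\phi := \Phi_k$ and write $T_k = \phi + nT$ for the unique $n \in \mathbb{Z}$ with $\phi \in [0,T)$. The change of variables $v = u - nT$, together with the periodicity $K(u, T_k) = K(v, \phi)$, transforms the expression into
\[ \int_\phi^\infty f(v\,[T])\, K(v, \phi)\, \d v. \]
Here the right-hand side manifestly depends on $\mathcal{F}_k$ only through $\Phi_k$, so we already obtain the Markov property.

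To identify the density, I would decompose $[\phi, \infty)$ along consecutive intervals of length $T$. Writing $v = t + iT$ with $t \in [0, T)$ and $i \geq 0$, and adopting the convention $K(t+iT, \phi) = 0$ whenever $t + iT < \phi$ (which affects only the $i=0$ term on $[0,\phi)$), Fubini gives
\[ \int_\phi^\infty f(v\,[T])\, K(v, \phi)\, \d v = \int_0^T f(t) \sum_{i \geq 0} K(t+iT, \phi)\, \d t = \int_0^T f(t)\, K^T(t, \phi)\, \d t. \]
This is exactly the claimed kernel.

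The only subtle point is the handling of the boundary in the $i=0$ piece, which is resolved by extending $K$ by zero below its natural domain; after that the argument is essentially bookkeeping. As a sanity check, one can also verify that $\int_0^T K^T(t, \phi)\, \d t = \int_\phi^\infty K(v, \phi)\, \d v = H(\phi, \phi) - \lim_{v \to \infty} H(v, \phi) = 1$ thanks to the lower bound $\lambda \geq \lambda_{\min} > 0$ which forces $H(v, \phi) \to 0$, confirming that $K^T(\cdot, \phi)$ is indeed a probability density on $\T$.
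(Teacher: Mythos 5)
Your proof is correct and follows essentially the same route as the paper: exploit the $T$-periodicity of $K$ to reduce dependence on $T_k$ to dependence on $\Phi_k$, then decompose the half-line $[\Phi_k,\infty)$ into blocks of length $T$ (your Fubini step is the paper's summation over $\Delta_{k+1}=i$). The extra details you supply --- conditioning on the full filtration $\mathcal{F}_k$ rather than just on $T_k$, and making explicit the convention $K(a,b)=0$ for $a<b$ when handling the $i=0$ term --- are sound and make the argument cleaner but do not change the approach.
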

\begin{proof}
	We write:
	\begin{equation}
	\label{eq:decomposition-phase}
	\forall k \geq 1, \quad T_k =: p_k T + \Phi_k, \quad \text{ with } \quad \Phi_k \in [0,T)  \quad \text{ and } \quad \Delta_{k+1} := p_{k+1} - p_k \in \mathbb{N}. 
	\end{equation}
	First, we note that by periodicity of $K$, 
	\[  \P(T_{k+1} - T_k \in \d t \,|\, T_k) = K(t+T_k,T_k) \d t =  K(t+\Phi_k, \Phi_k) \d t \]
	only depends on $\Phi_k$.
	Second, we have
	\begin{align*}
		\P(\Delta_{k+1} = i, \, \Phi_{k+1} \in \d \phi \,|\, \Phi_k) &= \P(T_{k+1} - T_k \in iT - \Phi_k + \d \phi \,|\, \Phi_k) \\
								       &= K(iT + \phi, \Phi_k) \d \phi.
	\end{align*}
Using that $K(u+iT,\Phi_k) = K(u, \Phi_k-iT)$ and summing on all the possible $i \in \N$ ends the proof.
\end{proof}
\begin{proposition}
	\label{prop:pi is the unique invariant measure of the phases}
	The Markov chain $(\Phi_k)_{k \geq 0}$ has a unique invariant probability measure with a continuous and periodic density $\pi \in C(\T)$. In addition, the solutions of \eqref{eq:integral eq of rho:K} in $L^1(\T)$ are given by the one dimensional vector space $\{ \beta \pi, \beta \in \R \}$. 
\end{proposition}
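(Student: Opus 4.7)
The plan is to identify $T$-periodic $L^1$ solutions of \eqref{eq:integral eq of rho:K} with fixed points of the transition operator $L$ of the chain $(\Phi_k)$ on $[0, T)$, and then to exploit a Doeblin-type minorization of the kernel $K^T$. The reduction proceeds as follows. Given $\rho \in L^1(\T)$ extended $T$-periodically to $\R$, I split the integration in \eqref{eq:integral eq of rho:K} at $t \in [0, T)$ over the intervals $(-\infty, t] \cap [jT, (j+1)T)$ for $j \leq 0$, perform the change of variables $u \mapsto u - jT$, and use $K(t, u - jT) = K(t + jT, u)$, the periodicity of $\rho$, and the vanishing $K(t, \cdot) \equiv 0$ on $(t, \infty)$ to absorb the boundary piece in the $j = 0$ interval. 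This yields
\[ \int_{-\infty}^t K(t, u) \rho(u) \d u = \int_0^T K^T(t, s) \rho(s) \d s, \]
so \eqref{eq:integral eq of rho:K} becomes the fixed-point equation $\rho = L \rho$ on the circle.

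I would then establish the Doeblin minorization: for all $t, s \in [0, T)$,
\[ K^T(t, s) \geq \lambda_{\min} e^{-\lambda_{\max} T} =: \beta/T, \]
by keeping the $i = 0$ term of the defining sum when $t \geq s$ and the $i = 1$ term when $t < s$, combined with $H(t, u) \geq e^{-\lambda_{\max}(t-u)}$. This is a Doeblin condition with constant $\beta$, from which existence and uniqueness of the invariant probability density $\pi$ follow by, e.g., iterating $L$ on any probability density and passing to the limit. Continuity $\pi \in C(\T)$ then follows from the relation $\pi = L \pi$ together with the joint continuity of $K^T$ on $[0, T)^2$, which itself follows from the $C^1$ regularity of $K(\cdot, s)$ and the uniform convergence of the series via the bound $K(t + iT, s) \leq \lambda_{\max} e^{-\lambda_{\min}(i-1)T}$ for $i \geq 1$.

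For the one-dimensional structure of the solution space, I would use that $L$ preserves the mean: by Fubini,
\[ \int_0^T K^T(t, s) \d t = \int_s^\infty K(u, s) \d u = H(s, s) - H(\infty, s) = 1. \]
Hence any real $\rho \in L^1(\T)$ with $L\rho = \rho$ and $\int_0^T \rho = 0$ satisfies $\rho(t) = \int_0^T (K^T(t, s) - \beta/T) \rho(s) \d s$; taking absolute values, using $K^T - \beta/T \geq 0$, and integrating in $t$ yields $\|\rho\|_{L^1} \leq (1 - \beta) \|\rho\|_{L^1}$, so $\rho \equiv 0$. Since $\int_0^T \pi = 1 \neq 0$, the functional $\rho \mapsto \int_0^T \rho$ is an isomorphism from the solution space onto $\R$, and the solution space equals $\R \pi$. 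The main obstacle is the bookkeeping in the reduction identity: the $t$-dependent domain $(-\infty, t]$ must be folded onto the fundamental domain $[0, T)$ using both periodicity of $\rho$ and the vanishing of $K(t, \cdot)$ past $t$; once this identity is established, everything else is a standard consequence of Doeblin's condition on a compact state space.
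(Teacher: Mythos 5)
Your proposal is correct and follows the same overall skeleton as the paper: reduce \eqref{eq:integral eq of rho:K} to the fixed-point equation $\rho = K^\T\rho$ on the circle by folding $(-\infty, t]$ onto $[0,T)$, then use the uniform lower bound $K^T(t,s) \geq \delta > 0$ as a Doeblin condition to get existence, uniqueness, and continuity of $\pi$. The one place you diverge is the argument that the fixed-point space of $K^\T$ in $L^1(\T)$ is one-dimensional. The paper proves that any fixed point has \emph{constant sign}: with $\theta := \min(\|x_+\|_{L^1},\|x_-\|_{L^1})$, it bounds $K^\T x_\pm \geq \delta\theta$ pointwise, exploits that $K^\T$ preserves $L^1$-mass, and deduces $\|x\|_{L^1} \leq \|x\|_{L^1} - 2\delta\theta T$, forcing $\theta = 0$; the one-dimensionality then comes for free from uniqueness of the invariant probability (normalize $\rho$ by $\int\rho$). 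You instead show directly that the solution space injects into $\R$ via $\rho \mapsto \int_0^T \rho$: subtracting the constant $\beta/T$ from the kernel leaves the action on mean-zero functions unchanged, and since $K^T - \beta/T \geq 0$ with total mass $1-\beta < 1$ the map is a strict $L^1$-contraction there, so only $\rho = 0$ can be a mean-zero fixed point. Both arguments are correct and exploit the same Doeblin minorization; yours is a touch more streamlined because it bypasses the constant-sign lemma entirely and reads the one-dimensionality off a linear functional, whereas the paper's constant-sign step is what licenses treating $\rho/\int\rho$ as a probability measure in the first place. Your explicit bound $\delta = \lambda_{\min}e^{-\lambda_{\max}T}$ (paper leaves $\delta$ abstract) and your verification that $\int_0^T K^T(t,s)\,\d t = 1$ are also both correct and useful.
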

\begin{proof}
	Let $\delta := \inf_{t,s \in [0, T]} K^T(t, s) > 0$. We have
$\inf_{s \in [0, T]} K^T( \cdot , s)  \geq \delta T \text{Unif}(\cdot)$, 
 where \text{Unif} is the uniform measure on $[0, T]$. Therefore,  Doeblin's criterion is satisfied and $(\Phi_k)$ has a unique invariant probability measure $\pi(\d t)$. As
 $\pi(\d t) = \int_0^T K^T(t, s) \pi(\d s)$, it holds that $\pi$ has a continuous density $t \mapsto \pi(t) \in C(\T)$. We write $K^\T= L^1(\T) \rightarrow C(\T)$ for the operator 
 	\begin{equation}
		\label{eq:def-KT-operator}
y \in L^1(\T), \quad K^\T(y) = t \mapsto \int_0^T K^T(t, s) y(s) \d s. 
\end{equation}
Let $x \in L^1(\T)$ such that $x = K^\T(x)$. We prove that $x$ has a constant sign.
	We write $x_+$ for the positive part of $x$ and $x_-$ for its negative part. 
	Let $\theta = \min(\norm{x_+}_{L^1(\T)}, \norm{x_-}_{L^1(\T)})$. We have $K^\T(x_+)(t) \geq \delta \theta$ and $K^\T(x_-)(t) \geq \delta \theta$.
	So
	\begin{align*}
		\norm{x}_{L^1(\T)} &= \norm{K^\T(x_+) - K^\T(x_-)}_{L^1(\T)} \\
	& \leq \norm{K^\T(x_+) - \delta \theta}_{L^1(\T)} +  \norm{K^\T(x_-) - \delta \theta}_{L^1(\T)} = \norm{x}_{L^1(\T)} - 2 \delta \theta.  
\end{align*}
Therefore $\theta = 0$, and so $x$ has constant sign.
Finally, consider $\rho \in L^1(\T)$ a solution of \eqref{eq:integral eq of rho:K}. Using that $K(t, s) = 0$ for $s > t$, we have
\begin{align*}
	\rho(t) &= \int_{-\infty}^T K(t, s) \rho(s) \d s = \sum_{p \geq 0} \int_{-pT}^{T-pT} K(t, s) \rho(s) \d s \\
		&= \sum_{p \geq 0} \int_0^T K(t, u-pT) \rho(u) \d u = \int_0^T K^T(t, u) \rho(u) \d u.
\end{align*}
So $\rho = K^\T(\rho)$. Therefore, $\rho / (\int_{\T} \rho)$ is an invariant probability measure of $(\Phi_k)$, and by uniqueness, it holds that
$\rho(t) = (\int_{\T} \rho) \pi(t)$. \end{proof}
\begin{remark}
	So $\rho$, the solution of \eqref{eq:global-rho} exists and is unique: there exists a unique constant $\beta > 0$ such that
	$\rho = \frac{\pi}{\beta}$.
	Recall the decomposition \eqref{eq:decomposition-phase}. We verify that under the invariant measure $\mathcal{L}(\Phi_k) = \pi$, it holds that $\beta = \E (\Delta_{k+1})$.
\end{remark}

\section{Backward recurrence time}
\label{sec:backward}
\subsection{The canonical process and Renewal equations}
We introduce a slight generalization of the process $(Y^{s}_t)$. Let $\nu \in \cP(\R_+)$.
Let $N(\d u,\d z)$ be a Poisson measure on $\R_+ \times \R_+$ of intensity of Lebesgue measure $\d u \d z$. 
Let $(Y^{\nu, s}_t)_{t \geq 0}$ be the solution of 
\begin{equation} 
	\label{eq:canonical represention of Y}
	Y^{\nu, s}_t = Y^{\nu, s}_s + (t-s) - \int_s^t{ \int_{\R_+} Y^{\nu, s}_{u-} 1_{\{ z \leq \lambda(u, u - Y^{\nu, s}_{u-}) \}} N(\d u, \d z)}, 
\end{equation}
starting with law $Y^{\nu, s}_s \sim \nu$ at time $s$. This process $(Y^{\nu, s}_t)_{t \geq s}$ can be viewed as a ``canonical process'' associated to the renewal process: $Y^{\nu, s}_t$ is the time passed since the last jump. When $Y^{\nu, s}_{t-} \neq Y^{\nu, s}_t$, there is a jump of the process to zero, corresponding an event. 
Let $N^{\nu, s}_t$ be the number of jumps between $s$ and $t$:
\[ N^{\nu,s}_t = \int_s^t{ 1_{\{ z \leq \lambda(u, u-Y^{\nu,s}_{u-}) \}} N(\d u,\d z). } \]
We let $r^\nu(t, s) := \lim_{\Delta \downarrow 0} \frac{1}{\Delta} \P( \text{$Y^{\nu, s}$ has a jump between } [t, t+\Delta))$.
\begin{lemma}
	It holds that $r^\nu(t, s) = \E \lambda(t, t-Y^{\nu, s}_t)$.
\end{lemma}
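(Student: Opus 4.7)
The plan is to relate $\P(\text{jump in } [t, t+\Delta))$ to the expectation of the jump count $\Delta N := N^{\nu,s}_{t+\Delta} - N^{\nu,s}_t$, identify the compensator of $\Delta N$ from the Poisson measure representation~\eqref{eq:canonical represention of Y}, and then pass to the limit $\Delta \downarrow 0$ using the c\`adl\`ag regularity of $Y^{\nu,s}$. The starting bookkeeping is the sandwich
\[ \E[\Delta N] - \E[\Delta N \indic{\Delta N \geq 2}] \leq \P(\Delta N \geq 1) \leq \E[\Delta N], \]
which follows at once from $\indic{\Delta N \geq 1} \leq \Delta N$ and $\Delta N - \indic{\Delta N \geq 1} \leq \Delta N \indic{\Delta N \geq 2}$.

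For the main term, the compensator formula for the thinned Poisson measure in~\eqref{eq:canonical represention of Y} together with Fubini yield
\[ \E[\Delta N] = \E \int_t^{t+\Delta}{ \lambda(u, u - Y^{\nu,s}_{u-}) \d u} = \int_t^{t+\Delta}{ \E \lambda(u, u - Y^{\nu,s}_u) \d u}, \]
where I replace $Y^{\nu,s}_{u-}$ by $Y^{\nu,s}_u$ using that the two coincide for Lebesgue-a.e.\ $u$ almost surely (the jumps form an at-most-countable set). For the error term, the bound $\lambda \leq \lambda_{\max}$ shows that $\Delta N$ is stochastically dominated by $N([t,t+\Delta)\times[0,\lambda_{\max}])$, a Poisson variable of parameter $\lambda_{\max} \Delta$, whence $\E[\Delta N \indic{\Delta N \geq 2}] = O(\Delta^2)$.

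Dividing by $\Delta$, the claim reduces to showing right-continuity at $t$ of $u \mapsto \E\lambda(u, u - Y^{\nu,s}_u)$. Because $Y^{\nu,s}$ is c\`adl\`ag and almost surely has no jump at the \emph{fixed} time $t$, one has $Y^{\nu,s}_u \to Y^{\nu,s}_t$ a.s.\ as $u \downarrow t$; combined with continuity of $\lambda$ on $\{(u, v) : v \leq u\}$ and the uniform bound $\lambda_{\min} \leq \lambda \leq \lambda_{\max}$, dominated convergence closes the argument and gives $r^\nu(t,s) = \E\lambda(t, t-Y^{\nu,s}_t)$. I expect this final continuity step to be the only (very minor) technical point; everything else is a routine Poisson-measure computation using only $\lambda \leq \lambda_{\max}$.
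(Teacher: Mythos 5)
Your proof is correct but uses a genuinely different decomposition than the paper's. The paper conditions on $Y^{\nu,s}_t$, writes the survival probability in closed form, $\P(N^{\nu,s}_{t+\Delta} - N^{\nu,s}_t = 0) = \E \exp\bigl(-\int_t^{t+\Delta} \lambda(u, u - (Y^{\nu,s}_t + (u-t)))\, \d u\bigr)$, and Taylor-expands the exponential in $\Delta$. You instead work with the jump count $\Delta N = N^{\nu,s}_{t+\Delta} - N^{\nu,s}_t$: you identify $\E[\Delta N]$ from the compensator of the thinned Poisson measure, sandwich $\P(\Delta N \geq 1)$ between $\E[\Delta N]$ and $\E[\Delta N] - \E[\Delta N \indic{\Delta N \geq 2}]$, and kill the remainder by Poisson domination via $\lambda \leq \lambda_{\max}$. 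Both routes are sound; the paper's is shorter because in this model the conditional survival function is explicit, while your compensator argument would survive unchanged in settings where it is not.

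One small technical point. In the final limit you invoke continuity of $\lambda$ on $\{(u,v) : v \leq u\}$, i.e.\ joint continuity, whereas the paper assumes only that $t \mapsto \lambda(t,u)$ is continuous for each fixed $u$. The gap is harmless, but for a slightly subtler reason than you state: almost surely the first jump of $Y^{\nu,s}$ after time $t$ occurs strictly after $t$, so on a right-neighbourhood of $t$ one has $Y^{\nu,s}_u = Y^{\nu,s}_t + (u-t)$ and therefore $u - Y^{\nu,s}_u \equiv t - Y^{\nu,s}_t$ is \emph{constant}; only one-variable continuity of $\lambda$ is then used. Note that your remark about there being a.s.\ no jump at the fixed time $t$ is not what gives $Y^{\nu,s}_u \to Y^{\nu,s}_t$ as $u \downarrow t$ --- that holds pathwise because $Y^{\nu,s}$ is c\`adl\`ag --- but it is essentially the observation that makes the second argument of $\lambda$ locally constant, which is the step that actually matters.
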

\begin{proof}
It holds that
\begin{align*}
	\P(N^{\nu, s}_{t+\Delta}-N^{\nu, s}_t = 0) = \E \exp \left( - \int_t^{t+\Delta} \lambda(u, u-(Y^{\nu, s}_{t}+(u-t))) \right).
\end{align*}
Therefore, $\P(N^{\nu, s}_{t+\Delta}-N^{\nu, s}_t > 0) = \Delta \E \lambda(t, t-Y^{\nu, s}_{t})+ o(\Delta)$, proving the claim. 
\end{proof}
We also write 
\[ H^\nu(t, s) := \P( N^{\nu, s}_t = 0) = \int \exp\left(- \int_s^t \lambda(\theta, s-u) \d \theta \right) \nu(\d u) \]
and let $K^\nu(t, s)$ be the density of the first jump, that is: $K^\nu(t, s) := -\frac{d}{dt} H^\nu(t, s)$.
\begin{remark}
	This extends our previous notations in the following way:
	\[ H(t, s) = H^{\delta_0}(t, s), \quad K(t, s) = K^{\delta_0}(t, s), \quad r(t, s) = r^{\delta_0}(t, s), \quad Y^s_t = Y^{\delta_0, s}_t. \]
	where $\delta_0$ is the Dirac probability measure at zero.
\end{remark}
It is well known that the function $r^\nu(t, s)$ is the unique solution of a Volterra integral equation, also known as renewal equations. We recall this result without proof.
\begin{proposition}
	\label{prop:volterra int. eq. r}
	It holds that $r^\nu$ is the unique solution of the Volterra integral equations:
	\[ r^\nu(t, s) = K^\nu(t, s) + \int_s^t{ r(t, u) K^\nu(u, s) \d u} = K^\nu(t, s) + \int_s^t{ r^\nu(u, s) K(t,u)} \d u. \]
\end{proposition}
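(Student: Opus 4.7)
The plan is to represent the intensity $r^\nu(t,s)$ as the sum $\sum_{k \geq 1} f^\nu_k(t,s)$, where $f^\nu_k(t,s)$ is the density of the $k$-th jump time $T_k$ under the law of $Y^{\nu,s}$. Applying the strong Markov property either at $T_{k-1}$ (conditioning on the \emph{previous} jump) or at $T_1$ (conditioning on the \emph{first} jump) will give two recursions on the $f^\nu_k$, and summing each of these will yield one of the two stated Volterra equations. Uniqueness then follows from a standard Picard iteration, using that the kernel $K$ is bounded by $\lambda_{\max}$ on compact intervals.

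Concretely, I would first observe that $f^\nu_1(t,s) = K^\nu(t,s)$ by the definition of $K^\nu$. For $k \geq 2$, conditioning on $T_{k-1}=u$ and using that a jump resets $Y$ to zero, so that the density of the next inter-arrival time is $K(\cdot,u)$, gives
\[ f^\nu_k(t,s) = \int_s^t K(t,u) f^\nu_{k-1}(u,s) \d u. \]
Alternatively, conditioning on $T_1=u$ and using that after the first jump the process is distributed as $Y^{\delta_0, u}$ gives
\[ f^\nu_k(t,s) = \int_s^t K^\nu(u,s) f^{\delta_0}_{k-1}(t,u) \d u, \quad k \geq 2. \]
Summing each identity over $k \geq 2$ and exchanging sum and integral (justified by nonnegativity and Fubini--Tonelli) produces, respectively, $r^\nu(t,s) = K^\nu(t,s) + \int_s^t K(t,u) r^\nu(u,s) \d u$ and $r^\nu(t,s) = K^\nu(t,s) + \int_s^t K^\nu(u,s) r(t,u) \d u$.

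The main obstacle is justifying the initial identity $r^\nu(t,s) = \sum_{k \geq 1} f^\nu_k(t,s)$, which requires locally uniform summability of the jump-time densities together with the coincidence of this sum with the infinitesimal jump rate. The bound $\lambda \leq \lambda_{\max}$ forces each inter-arrival time to stochastically dominate an exponential of rate $\lambda_{\max}$, so $\P(T_k \leq t \,|\, T_0 = s)$ decays at least like the tail of a Poisson$(\lambda_{\max}(t-s))$ law in $k$; this gives uniform summability on compacts and legitimates the exchange of the $h \downarrow 0$ limit defining $r^\nu$ with the summation. For uniqueness, if $r^\nu$ and $\tilde r$ both solve the second Volterra equation, their difference $\delta$ satisfies $|\delta(t)| \leq \lambda_{\max} \int_s^t |\delta(u)| \d u$, and iterating $n$ times yields $|\delta(t)| \leq (\lambda_{\max}(t-s))^n/n! \cdot \|\delta\|_{L^\infty([s,t])}$, forcing $\delta \equiv 0$; the same argument applies to the first equation with the roles of the kernel variables exchanged.
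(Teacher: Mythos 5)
The paper states this proposition with the explicit remark ``we recall this result without proof,'' so there is no in-paper argument for you to have matched or missed; your proposal must stand on its own, and it does. The decomposition $r^\nu(t,s)=\sum_{k\geq1}f^\nu_k(t,s)$ into jump-time densities is the right starting point, $f^\nu_1(t,s)=K^\nu(t,s)$ is immediate from the definition of $K^\nu$, and the two conditionings (on the previous jump $T_{k-1}$, respectively on the first jump $T_1$, using that a jump resets the age to $0$ so the post-jump process is distributed as $Y^{\delta_0,\cdot}$) give precisely the two convolution recursions you write; summing them over $k\geq 2$ yields the two stated Volterra equations, and the Fubini--Tonelli exchange is automatic since everything is nonnegative. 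The only genuinely delicate step is legitimating the initial identity, and your stochastic domination argument ($\lambda\leq\lambda_{\max}$ makes each inter-arrival time stochastically larger than an $\mathrm{Exp}(\lambda_{\max})$, hence $\P(T_k\leq t\,|\,T_0=s)$ is bounded by a Poisson$(\lambda_{\max}(t-s))$ tail in $k$) is exactly what is needed both for the locally uniform convergence of $\sum_k f^\nu_k$ and for interchanging the $h\downarrow0$ limit with the series. The Picard iteration for uniqueness is standard given $K\leq\lambda_{\max}$ on compacts. One small point worth making explicit: for a general initial law $\nu$ the first identity $r^\nu(t,s)=K^\nu(t,s)+\int_s^t r(t,u)K^\nu(u,s)\,\d u$ is an explicit formula once $r=r^{\delta_0}$ is known rather than an equation in $r^\nu$, so the ``uniqueness'' content there is really a statement about the $\nu=\delta_0$ case, where it is a backward Volterra equation in the variable $s$ at fixed $t$; your remark that the Picard argument applies with the kernel variables exchanged handles exactly this case.
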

Finally, let $g: \R_+ \rightarrow \R_+$ be a non-negative measurable function. We have
\begin{lemma}
	\label{lem:renewa-structure-g}
	It holds that
	\[
		\E g(Y^{\nu, s}_t) = \int g(x + (t-s)) H^{\delta_x}(t, s) \nu (\d x) + \int_s^t K^\nu(u, s) \E g(Y^{\delta_0, u}_t) \d u.   
	\]
\end{lemma}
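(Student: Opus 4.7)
The plan is a standard first-jump decomposition for the canonical process. Let $\tau_1 := \inf\{u > s : Y^{\nu,s}_{u-} \neq Y^{\nu,s}_u\}$ denote the first jump time after $s$. From the Poisson-driven SDE~\eqref{eq:canonical represention of Y}, between $s$ and $\tau_1$ the process grows deterministically at rate one: $Y^{\nu,s}_u = Y^{\nu,s}_s + (u-s)$ for $u \in [s, \tau_1)$. I would split
\[ \E g(Y^{\nu,s}_t) = \E[g(Y^{\nu,s}_t) \indic{\tau_1 > t}] + \E[g(Y^{\nu,s}_t) \indic{\tau_1 \le t}], \]
and treat the two terms separately.

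For the first term, on $\{\tau_1 > t\}$ one has $Y^{\nu,s}_t = Y^{\nu,s}_s + (t-s)$. Conditioning on $Y^{\nu,s}_s = x$, the conditional probability of no jump occurring in $[s,t]$ equals, by the definition of $H^\nu$ specialized to the initial law $\delta_x$, the quantity $H^{\delta_x}(t,s)$. Integrating against $\nu$ yields the first term on the right-hand side of the claim.

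For the second term, I would first identify the density of $\tau_1$. The identity $\P(\tau_1 > u) = \P(N^{\nu,s}_u = 0) = H^\nu(u,s)$ together with $K^\nu = -\partial_t H^\nu$ shows that $\tau_1$ admits the density $u \mapsto K^\nu(u,s)$ on $(s,\infty)$. At $\tau_1$ the process is at zero by construction. Then the strong Markov property of the Poisson measure $N$ at $\tau_1$—the shifted measure $N(\tau_1 + \cdot, \cdot)$ is independent of the past and is again Poisson of intensity $\d u \d z$—implies that, conditional on $\tau_1 = u \in (s,t]$, the process $(Y^{\nu,s}_v)_{v \ge u}$ has the same distribution as $(Y^{\delta_0,u}_v)_{v \ge u}$, and so $\E[g(Y^{\nu,s}_t) \,|\, \tau_1 = u] = \E g(Y^{\delta_0,u}_t)$. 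Integrating against the density of $\tau_1$ gives the second term of the claim.

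The only substantive point is the strong Markov step invoked above; this is standard for SDEs driven by independent Poisson measures and does not interact with the periodic structure, so it should not present any real difficulty. The remainder of the argument is bookkeeping, and summing the two contributions yields the identity.
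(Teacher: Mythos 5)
The paper states this lemma without giving a proof, and your first-jump decomposition on $\{\tau_1 > t\}$ versus $\{\tau_1 \le t\}$ is exactly the intended argument; it is correct. The only nontrivial step, the strong Markov property at $\tau_1$, holds because $\tau_1$ is a stopping time for the filtration generated by the Poisson measure $N$ and the initial law, the restriction of $N$ to $(\tau_1, \infty)$ is independent of the past, and $Y^{\nu,s}_{\tau_1} = 0$ deterministically, so conditionally on $\tau_1 = u$ the post-$\tau_1$ process has the law of $Y^{\delta_0, u}$ regardless of the initial value $x$.
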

We now to study the long time behavior of this process $(Y^{\nu, s}_t)$.
Recall that $\nu^\varphi_\infty$ is defined by \eqref{eq:def of nu infty varphi}: 
	\[ \nu^\varphi_\infty(u)\d u := \rho(\varphi-u)H(\varphi, \varphi-u) \d u. \] 
\begin{proposition}
	\label{prop:invariant-periodic-law-of-Y}
	It holds that for all $t \geq s$, $\cL(Y^{\nu^s_\infty, s}_t) = \nu^t_\infty$.
\end{proposition}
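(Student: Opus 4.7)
The plan is to verify the invariance by expanding $\E g(Y^{\nu^s_\infty, s}_t)$ for a bounded test function $g$ via Lemma~\ref{lem:renewa-structure-g} and iterating the renewal identity, with the key input being the preliminary identity $r^{\nu^s_\infty}(t, s) = \rho(t)$ for all $t \geq s$.

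To establish $r^{\nu^s_\infty} \equiv \rho$, I would first compute $K^{\nu^s_\infty}(t, s)$ directly. Using $H^{\delta_u}(t, s) = H(t, s-u)/H(s, s-u)$ together with $\nu^s_\infty(u) \d u = \rho(s-u) H(s, s-u) \d u$, the change of variable $v = s-u$ yields
\[ K^{\nu^s_\infty}(t, s) = \int_{-\infty}^s K(t, v) \rho(v) \d v. \]
Substituting the candidate $\rho$ into the Volterra equation of Proposition~\ref{prop:volterra int. eq. r} then gives
\[ K^{\nu^s_\infty}(t, s) + \int_s^t K(t, u) \rho(u) \d u = \int_{-\infty}^t K(t, v) \rho(v) \d v = \rho(t), \]
where the last equality is precisely~\eqref{eq:integral eq of rho:K}. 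Uniqueness of Volterra solutions then yields $r^{\nu^s_\infty} \equiv \rho$.

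Next I apply Lemma~\ref{lem:renewa-structure-g} with $\nu = \nu^s_\infty$, and then re-apply it (with $\nu = \delta_0$ and initial time $u$) to each factor $\E g(Y^{\delta_0, u}_t)$. Iterating and summing the resulting series, whose $n$th kernel $k_n$ (with $k_1 := K^{\nu^s_\infty}(\cdot, s)$ and $k_{n+1}(v) := \int_s^v K(v, u) k_n(u) \d u$) satisfies $\sum_n k_n = r^{\nu^s_\infty}(\cdot, s)$ by iterating Proposition~\ref{prop:volterra int. eq. r}, yields
\[ \E g(Y^{\nu^s_\infty, s}_t) = \int g(x+(t-s)) H^{\delta_x}(t, s) \nu^s_\infty(\d x) + \int_s^t r^{\nu^s_\infty}(u, s) g(t-u) H(t, u) \d u. \]
Using $H^{\delta_x}(t, s) \nu^s_\infty(x) = \rho(s-x) H(t, s-x)$ and the change $y = x + (t-s)$, the first term equals $\int_{t-s}^\infty g(y) \rho(t-y) H(t, t-y) \d y$. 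Using $r^{\nu^s_\infty} = \rho$ and $y = t-u$, the second term equals $\int_0^{t-s} g(y) \rho(t-y) H(t, t-y) \d y$. Adding them and recalling $\nu^t_\infty(y) = \rho(t-y) H(t, t-y)$ gives $\E g(Y^{\nu^s_\infty, s}_t) = \int_0^\infty g(y) \nu^t_\infty(y) \d y$, completing the proof.

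The main technical point is the convergence of the iterated series, i.e., that the remainder $\int_s^t k_{N+1}(v) \E g(Y^{\delta_0, v}_t) \d v$ vanishes as $N \to \infty$. This follows because $\int_s^t k_n(v) \d v = \P(N^{\nu^s_\infty, s}_t - N^{\nu^s_\infty, s}_s \geq n)$ tends to $0$ as $n \to \infty$, which is guaranteed by the uniform bound $\lambda \leq \lambda_{\max}$ that makes $N^{\nu^s_\infty, s}_t$ integrable.
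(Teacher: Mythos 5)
Your proof is correct and follows essentially the same two-step structure as the paper: Step~1 establishes $r^{\nu^s_\infty}(t,s) = \rho(t)$ by verifying that $\rho$ solves the same Volterra equation, exactly as the paper does, and Step~2 arrives at the identical intermediate identity $\E g(Y^{\nu^s_\infty,s}_t) = \int g(x+(t-s))H^{\delta_x}(t,s)\,\nu^s_\infty(\d x) + \int_s^t r^{\nu^s_\infty}(u,s)\, g(t-u)\, H(t,u)\,\d u$ before changing variables. The only cosmetic difference is that the paper obtains this identity by conditioning on the time $\tau_t$ of the last jump before $t$ (via $\P(\tau_t \in \d u) = r^{\nu}(u,s)H(t,u)\,\d u$), whereas you iterate Lemma~\ref{lem:renewa-structure-g} into a Neumann series and resum it using Proposition~\ref{prop:volterra int. eq. r}; both are standard and lead to the same formula, and your explicit treatment of the vanishing remainder is a welcome extra care.
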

\begin{proof}	\textit{Step 1.}
	Let $\nu := \nu^s_\infty$ and let $r^\nu(t, s) = \E \lambda(t, t-Y^{\nu, s}_t)$. We first verify that 
	\[ \forall t \geq s, \quad r^\nu(t, s) = \rho(t). \]
By Proposition \ref{prop:volterra int. eq. r}, $r^\nu$ is the unique solution of the Volterra integral equation
\[ r^\nu(t, s) = K^{\nu}(t,s) + \int_s^t{ K(t, u) r^\nu(u, s) \d u}. \] 
We verify that $\rho(t)$ solves the same equation. The result then follows by uniqueness.
We have:
\begin{align*}
	\rho(t) &= \int_{-\infty}^t{ K(t, u) \rho(u) \d u} = \int_{-\infty}^s{K(t, u) \rho(u) \d u} + \int_s^t{ K(t, u) \rho(u) \d u}.
\end{align*}
Therefore, it suffices to show that $\int_{-\infty}^s{ K(t, u) \rho(u) \d u} = K^\nu(t, s)$.
We have:
\begin{align*}
	K^\nu(t, s) &= \int_0^\infty{ \lambda(t, s-u) \exp \left(- \int_s^t{ \lambda(\theta, s-u) \d \theta}  \right) \nu(\d u)} \\
		    &= \int_0^\infty{ \lambda(t, s-u)  \exp \left(- \int_s^t{ \lambda(\theta, s-u) \d \theta}  \right) \rho(s-x) H(s, s-u) \d u } \\
		    &= \int_0^\infty { \lambda(t, s-u) H(t, s-u) \rho(s-u) \d u} = \int_{-\infty}^s{ K(t,u) \rho(u) \d u},
\end{align*}
as claimed. We used that $\exp \left(- \int_s^t{ \lambda(\theta, s-u) \d \theta}  \right)  H(s, s-u)  = H(t, s-u)$.\\
\textit{Step 2.} Let $g: \mathbb{R}_+ \rightarrow \R_+$ be a test function and $t > 0$. We verify that
$\E g(Y^{\nu,s}_t) =  \int_0^\infty{ g(u) \nu^t_\infty(u) \d u}$.
Let $\tau_t$ be the time of the last jump of $(Y^{\nu, s}_\cdot)$ before $t$, with the convention that $\tau_t = 0$ if there is no jump between $s$ and $t$. We have
\begin{align*}
	\E g(Y^{\nu,s}_t) &= \E g(Y^{\nu, s}_s + t - s) 1_{\{ \tau_t = 0 \}} + \E g(t-\tau_t)  1_{\{ \tau_t > 0 \}} =: A+B.
\end{align*}
It holds that
\begin{align*}
	A &= \int_0^\infty{ g(u + t -s ) H^{\delta_u}(t, s) \nu(\d u)} = \int_0^\infty{ g(u+t-s) H(t, s-u) \rho(s-u) \d u} \\
	  &= \int_{t-s}^\infty{ g(\theta) H(t, t-\theta) \rho(t-\theta) \d \theta}.
\end{align*}
In addition, for all $u \in (s, t)$, $\P(\tau_t \in \d u) = r^\nu(u, s) H(t, u) \d u$.  By Step 1, it holds that $r^\nu(u, s) = \rho(u)$. Altogether,
\[ B = \int_s^t{ g(t-u) \rho(u) H(t, u) \d u} = \int_0^{t-s}{ g(\theta) \rho(t-\theta) H(t, t-\theta) \d \theta}. \]
Therefore, $A+B = \int_0^\infty{ g(\theta) H(t, t-\theta) \rho(t-\theta) \d \theta} = \int_0^\infty{ g(\theta) \nu^t_\infty(\d \theta)}$. 
\end{proof}

\subsection{Proof of Theorem~\ref{th:backward}}
Let $s \in \R$ and $\varphi \in [0, T)$. For all $n \in \N$ such that $nT + \varphi \geq s$, we consider
\[ Y^\varphi_n :=  Y^{s}_{nT + \varphi}. \]
\begin{lemma}
	The sequence $(Y^\varphi_n)$ is a Markov chain on $\R_+$. In addition, given $g: \R_+ \rightarrow \R_+$ a non-negative measurable function, it holds that	\[ \E \left[ g(Y^\varphi_{n+1}) ~|~ Y^\varphi_n = y \right] = g(y+T) H^{\delta_y}(\varphi + T, \varphi) + \int_\varphi^{\varphi+T} K^{\delta_y}(v, \varphi) \E g(Y^{v}_{T+\varphi}) \d v.  \]
\end{lemma}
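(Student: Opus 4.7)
The plan is to combine the Markov property of the underlying continuous-time process with the $T$-periodicity of $\lambda$, and then to reduce the explicit formula to Lemma~\ref{lem:renewa-structure-g}. First, the enlarged pair $(Y^s_t, t\,[T])$ is a time-homogeneous Markov process on $\R_+ \times \T$: this is immediate from the canonical SDE \eqref{eq:canonical represention of Y}, since the only time dependence enters through the jump rate $\lambda(u, u - Y^s_{u-})$, which is $T$-periodic in $u$. Because the phase coordinate equals $\varphi$ at every sampling time $nT+\varphi$, it follows that $(Y^\varphi_n)_{n \geq 0}$ is itself a Markov chain on $\R_+$ whose transition kernel does not depend on $n$.

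To obtain the explicit formula, I would condition on $Y^\varphi_n = y$ and apply the Markov property at time $nT+\varphi$: conditionally on this event, the law of $(Y^s_{nT+\varphi+u})_{u \geq 0}$ coincides with that of the canonical process $(Y^{\delta_y, nT+\varphi}_{nT+\varphi+u})_{u \geq 0}$ started at time $nT+\varphi$ from $\delta_y$. The $T$-periodicity of $\lambda$ together with the translation invariance of the Poisson intensity then yield the identity in law
\[ \cL\bigl(Y^{\delta_y, nT+\varphi}_{(n+1)T+\varphi}\bigr) = \cL\bigl(Y^{\delta_y, \varphi}_{T+\varphi}\bigr), \]
reducing the problem to computing $\E g(Y^{\delta_y, \varphi}_{T+\varphi})$.

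At this point I would invoke Lemma~\ref{lem:renewa-structure-g} with $\nu = \delta_y$, $s = \varphi$ and $t = T+\varphi$. The first term there gives $g(y+T) H^{\delta_y}(T+\varphi, \varphi)$, corresponding to the event that no jump occurs in $[\varphi, T+\varphi]$ (so that $Y^{\delta_y,\varphi}_{T+\varphi} = y+T$); the second term integrates over the location $v \in (\varphi, T+\varphi]$ of the first jump, whose density is $K^{\delta_y}(v, \varphi)$, after which the process restarts from $0$ at time $v$ and contributes $\E g(Y^{\delta_0, v}_{T+\varphi}) = \E g(Y^v_{T+\varphi})$. Summing these two contributions reproduces the stated formula verbatim.

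The main delicate point is the shift-invariance step, i.e.\ justifying that the law of the canonical process depends on its starting time only through the phase modulo $T$; this rests on the periodicity hypothesis $\lambda(t+T, u+T) = \lambda(t, u)$ combined with the stationarity of the Poisson measure $N$ under translations of the time variable, and can be made rigorous by a pathwise change of variables in \eqref{eq:canonical represention of Y}. Everything else is a routine consequence of the material already developed in this section.
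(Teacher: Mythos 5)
Your proof is correct and follows essentially the same route as the paper's: both rest on Lemma~\ref{lem:renewa-structure-g} with $\nu=\delta_y$ and on the $T$-periodicity of $\lambda$ (hence of $H^{\delta_y}$ and $K^{\delta_y}$) to eliminate the dependence on $n$. The only cosmetic difference is the order of operations — you shift time by $nT$ first (so the lemma is applied with $s=\varphi$, $t=T+\varphi$) whereas the paper applies the lemma at $s=nT+\varphi$ and then performs the change of variable $v=\theta-nT$ — and you justify the Markov-chain property a priori via the enlarged process $(Y^s_t,\,t\,[T])$, while the paper reads it off a posteriori from the fact that the resulting formula does not depend on $n$; both are standard and equivalent.
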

\begin{proof}
	We have $\E \left[ g(Y^\varphi_{n+1}) ~|~ Y^\varphi_n = y \right] = \E g(Y^{\delta_y, nT+\varphi}_{(n+1)T + \varphi})$. Using Lemma~\ref{lem:renewa-structure-g}, we find that
	\[ \E \left[g(Y^\varphi_{n+1}) ~|~ Y^\varphi_n = y \right] = g(y+T) H^{\delta_y}(\varphi+T, \varphi) + \int_{nT + \varphi}^{(n+1)T + \varphi} K^{\delta_y}(\theta, nT+\varphi) \E g(Y^{\theta}_{(n+1)T + \varphi}) \d \theta. \] 
	We make the change variable $v = \theta - nT$. Using that $\E g(Y^{v+nT}_{(n+1)T + \varphi}) = \E g(Y^v_{T+\varphi})$,  
	we deduce the stated formula, which does not depend on $n$. Therefore $(Y^\varphi_n)$ is a time-homogeneous Markov chain.
\end{proof}
\begin{lemma}
	$\nu^\varphi_\infty$ is an invariant probability measure of the Markov Chain $(Y^\varphi_{n})$.
\end{lemma}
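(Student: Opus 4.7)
The plan is to not re-derive invariance from scratch via the transition kernel displayed in the previous lemma, but to reduce it to the continuous-time statement already proved in Proposition~\ref{prop:invariant-periodic-law-of-Y}. Indeed, that proposition says that if one starts the canonical process at time $s$ with law $\nu^s_\infty$, then at any later time $t\geq s$ its marginal law is $\nu^t_\infty$. Taking $s=\varphi$ and $t=\varphi+T$ and recalling that $\nu^{\varphi+T}_\infty=\nu^\varphi_\infty$ (stated in the lemma following \eqref{eq:def of mu infty varphi}), we immediately obtain that the one-step transition at lag $T$ preserves $\nu^\varphi_\infty$.

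More precisely, I would first invoke the time-homogeneity of $(Y^\varphi_n)$ established in the previous lemma: the transition kernel computed there does not depend on $n$ or on the base time $s$, so it suffices to verify invariance for a single chosen starting configuration. I then specialize to $s=\varphi$ and $n=0$, and construct the canonical process $(Y^{\nu^\varphi_\infty,\varphi}_t)_{t\geq\varphi}$ with initial distribution $\nu^\varphi_\infty$. By Proposition~\ref{prop:invariant-periodic-law-of-Y}, for every $t\geq\varphi$ one has $\cL(Y^{\nu^\varphi_\infty,\varphi}_t)=\nu^t_\infty$. Evaluating at $t=\varphi+T$ and applying the periodicity $\nu^{\varphi+T}_\infty=\nu^\varphi_\infty$ yields
\[
\cL\bigl(Y^{\nu^\varphi_\infty,\varphi}_{\varphi+T}\bigr)=\nu^{\varphi+T}_\infty=\nu^\varphi_\infty.
\]
Since $Y^{\nu^\varphi_\infty,\varphi}_{\varphi+T}$ is exactly the image of $Y^{\nu^\varphi_\infty,\varphi}_\varphi\sim\nu^\varphi_\infty$ under one step of the chain $(Y^\varphi_n)$, this is precisely the claimed invariance.

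There is no real obstacle in this proof; the only point requiring a line of justification is the reduction to the $s=\varphi$ case, which is immediate from the form of the transition kernel in the previous lemma (it involves only $\varphi$, $T$, and the initial age $y$, not $s$ itself). I would not attempt a direct verification by plugging $\nu^\varphi_\infty(y)=\rho(\varphi-y)H(\varphi,\varphi-y)$ into the explicit transition formula and computing the resulting double integral: while feasible using the identity $\exp(-\int_\varphi^{\varphi+T}\lambda(\theta,\varphi-y)d\theta)H(\varphi,\varphi-y)=H(\varphi+T,\varphi-y)$ and the renewal relation \eqref{eq:integral eq of rho:K}, it would essentially reproduce Step~1 and Step~2 of the proof of Proposition~\ref{prop:invariant-periodic-law-of-Y}, which is precisely what the shortcut above avoids.
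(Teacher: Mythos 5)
Your proof is correct and takes essentially the same approach as the paper: both reduce the discrete-time invariance claim to Proposition~\ref{prop:invariant-periodic-law-of-Y} (the continuous-time propagation $\cL(Y^{\nu^s_\infty,s}_t)=\nu^t_\infty$) combined with the $T$-periodicity of $\varphi\mapsto\nu^\varphi_\infty$. The paper simply writes this in one line, implicitly using time-homogeneity, whereas you spell out the reduction to $s=\varphi$; the substance is the same.
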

\begin{proof}
	Assume that $\cL(Y^\varphi_n) = \nu^\varphi_\infty$. By Proposition~\ref{prop:invariant-periodic-law-of-Y},	$\cL(Y^\varphi_{n+1}) = \cL(Y^{\nu^\varphi_\infty, nT+\varphi}_{(n+1)T + \varphi}) = \nu^\varphi_\infty$. 
\end{proof}
\begin{lemma}
	Let $n_0$ such that $n_0T + \varphi \geq s$. 
	There exists a constant $\beta \in (0, 1)$ such that for all $n \geq n_0$, 
	\[ d_{TV}(\cL(Y^\varphi_{n}), \nu^\varphi_\infty) \leq \beta^{n-n_0} d_{TV}(\cL(Y^\varphi_{n_0}), \nu^\varphi_\infty). \]
\end{lemma}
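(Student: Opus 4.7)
The plan is to establish a Doeblin-type minorization for the transition kernel of the Markov chain $(Y^\varphi_n)$ and then invoke the standard coupling contraction argument. Writing $P$ for this transition kernel, the previous lemma provides the explicit decomposition
\[ P(y, \d z) = H^{\delta_y}(\varphi+T, \varphi) \delta_{y+T}(\d z) + \int_\varphi^{\varphi+T} K^{\delta_y}(v, \varphi) \cL(Y^v_{T+\varphi})(\d z) \d v, \]
in which the first term (no jump on $[\varphi, \varphi+T]$) is a Dirac mass at $y+T$ that depends on $y$ and will be discarded. In the second term, one conditions on the time $v$ of the first jump after $\varphi$; crucially, conditional on this jump, the post-jump law $\cL(Y^v_{T+\varphi})$ does not depend on $y$.

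The first concrete step is to bound the jump density $K^{\delta_y}$ from below uniformly in $y$. Using the identity $K^{\delta_y}(v, \varphi) = \lambda(v, \varphi - y) \exp\bigl(-\int_\varphi^v \lambda(\theta, \varphi - y) \d \theta\bigr)$ together with the two-sided bounds $\lambda_{\min} \leq \lambda \leq \lambda_{\max}$, one obtains the uniform lower bound $K^{\delta_y}(v, \varphi) \geq \lambda_{\min} e^{-T\lambda_{\max}}$ for every $y \geq 0$ and every $v \in [\varphi, \varphi+T]$. Setting $\alpha := T \lambda_{\min} e^{-T\lambda_{\max}}$ and introducing the $y$-independent probability measure $\eta := \frac{1}{T} \int_\varphi^{\varphi+T} \cL(Y^v_{T+\varphi}) \d v$, this estimate yields the Doeblin minorization $P(y, \cdot) \geq \alpha\, \eta(\cdot)$ uniformly in $y \geq 0$. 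Note that $\alpha \leq (T\lambda_{\max}) e^{-T\lambda_{\max}} \leq 1/e < 1$, so $\alpha \in (0,1)$.

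Next, I would apply the classical consequence of minorization: decomposing $P(y, \cdot) = \alpha \eta + (1-\alpha) Q(y, \cdot)$ for some Markov kernel $Q$, a synchronous coupling (one coin flip per step, coupling with probability $\alpha$) gives
\[ d_{TV}(\mu P, \mu' P) \leq (1 - \alpha)\, d_{TV}(\mu, \mu') \]
for any two probability measures $\mu, \mu'$ on $\R_+$. Taking $\mu = \cL(Y^\varphi_{n_0})$ and $\mu' = \nu^\varphi_\infty$ (which is $P$-invariant by the previous lemma) and iterating the contraction $n - n_0$ times yields the claimed bound with $\beta := 1 - \alpha \in (0,1)$.

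The only mildly delicate point is proving the uniform-in-$y$ lower bound on $K^{\delta_y}$, i.e.\ identifying the jump contribution as the source of the $y$-independent mass; once the minorization is in hand, the contraction step is entirely classical. I do not anticipate any other substantive obstacle.
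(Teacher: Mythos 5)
Your proof is correct and follows essentially the same route as the paper: both extract the $y$-independent minorization $P(y,\cdot) \geq T\lambda_{\min} e^{-T\lambda_{\max}}\,\eta(\cdot)$ from the jump term of the one-step transition formula (using $\lambda_{\min}\le\lambda\le\lambda_{\max}$ to bound $K^{\delta_y}$ from below), and then invoke Doeblin/coupling contraction. You spell out the contraction constant $\beta = 1-\alpha$ more explicitly than the paper, which calls the minorization mass itself $\beta$ and then appeals to Doeblin's theorem for the result; your bookkeeping is the cleaner of the two.
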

\begin{proof}
	Denote by $y = Y^\varphi_n$. Let $g$ be a non-negative measurable function. By Lemma~\ref{lem:renewa-structure-g},	\[ \E g(Y^\varphi_{n+1}) = \E g(Y^{\delta_y, \varphi}_{\varphi+T}) \geq \int_{\varphi}^{\varphi + T} K^{\delta_y}(v, \varphi) \E g(Y^{\delta_0, v}_{T+\varphi}) \d v.  \]
	By assumption,
	$ T K^{\delta_y}(u, \varphi) \geq T \lambda_{\min} e^{-\lambda_{\max}T } =: \beta > 0$. 
	Following the argument of \cite[Sec. 3.4]{zbMATH07181472}, define the probability measure $\nu \in \cP([0, T])$ by
	\[ \nu(A) := \frac{1}{T} \int^{\varphi+T}_\varphi \P( Y^{\delta_0, u}_{T+\varphi} \in A) \d u. \]
	It holds that $\E g(Y^\varphi_1) \geq \beta \nu(g)$.
	Therefore, Doeblin's theorem applies, giving the result.
\end{proof}
We now give the proof of Theorem~\ref{th:backward}. Let $t \geq s$, we write $t = nT + \varphi$. Let $n_0$ be the smallest integer such that $n_0T + \varphi \geq s$, so that $t-s \leq (n-n_0) T + T$.
We choose $c = - \log(\beta) / T$ and $C = 2 e^{cT}$. We have
\[ d_{TV}(\cL(Y^s_t), \nu^t_\infty) = d_{TV}(\cL(Y^\varphi_n), \nu^\varphi_\infty) \leq 2 \beta^{n-n_0} \leq C e^{-c(t-s)}. \]
Finally, recall the representation $r(t, s) = \E \lambda(t, t-Y^{s}_t) = \E \lambda(\varphi, \varphi - Y^\varphi_n)$.
The function $\psi(y) := \lambda(\varphi, \varphi-y)$ is bounded by $\lambda_{\max}$, so
\[ \left| \E \psi(Y^\varphi_n) - \int_0^\infty \psi(y) \nu^\varphi_\infty(\d y)  \right| \leq \lambda_{\max} \beta^{n-n_0}. \]
We verify that $\int_0^\infty \psi(y) \nu^\varphi_\infty(\d y) = \rho(\varphi) = \rho(t)$, therefore, $|r(t, s) - \rho(s)| \leq C \lambda_{\max} e^{-c (t-s)}$.
The last stated inequality follows from the representation $\E [N^s_{t+a} - N^s_t] = \int_t^{t+a} r(u, s) \d u$.
\qed
\section{Forward recurrence time}
\label{sec:forward}
\subsection{Invariant measure of the sampled chain}
Recall that $X^s_t := T_{N^s_t + 1} - t$.
Let $\varphi \in [0, T)$ and $n \geq 0$ such that $nT+\varphi \geq s$. We set 
\[ X^\varphi_n := X^s_{nT+\varphi}. \]
We have seen that $(X^\varphi_n)_{n \geq 0}$ is a Markov Chain on $\mathbb{R}_+$ with transition probability given by
\begin{align*}
	\P(X^\varphi_{n+1} \in \cdot \,|\, X^\varphi_n = u) &= \P(\tilde{X}_{(n+1)T + \varphi} \in \cdot \,|\, \tilde{X}_{nT+\varphi} = u, \, \tilde{\Phi}_{nT+\varphi} = \varphi + u) \\
						  &= P_T(\cdot; (u, \varphi + u)),
\end{align*}
where $P_T(\cdot; (u, \varphi))$ is the semi-group of the PDMP $(\tilde{X}_t, \tilde{\Phi}_t)_{t \geq 0}$ introduced in Section~\ref{sec:pdmp}.
\begin{proposition}
	The probability measure $\mu^\varphi_\infty(t)dt$, defined by \eqref{eq:def of mu infty varphi} is an invariant probability measure of the Markov Chain $(X^\varphi_n)$.
\end{proposition}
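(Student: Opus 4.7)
The plan is to exhibit an initial condition under which the chain $(X^\varphi_n)_n$ has marginal law $\mu^\varphi_\infty$ at every step; since the transition kernel does not depend on the starting law, invariance will then be immediate. The natural choice is to run the canonical process of Section~\ref{sec:backward} in the backward-stationary regime: I would take $\nu := \nu^s_\infty$ and consider $(Y^{\nu, s}_t)_{t \geq s}$. By Proposition~\ref{prop:invariant-periodic-law-of-Y}, this gives $\cL(Y^{\nu, s}_t) = \nu^t_\infty$ for every $t \geq s$.

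Next I would read off the marginal law of the associated forward recurrence time from this backward stationarity. Let $\tau_t$ denote the first jump time of the canonical process strictly after $t$. Conditionally on $Y^{\nu, s}_t = y$, the last jump occurred at $w = t - y$, and from \eqref{eq:canonical represention of Y} and \eqref{eq:def of K} the conditional density of $\tau_t$ on $(t, \infty)$ is $\sigma \mapsto K(\sigma, w)/H(t, w)$. Integrating this against the density $y \mapsto \rho(t-y) H(t, t-y)$ of $\nu^t_\infty$, the factors $H(t, w)$ cancel and the change of variable $w = t - y$ yields
\[ \P(\tau_t - t \in \d u) = \left( \int_{-\infty}^t K(t+u, w) \rho(w) \, \d w \right) \d u. \]
At $t = nT + \varphi$, the $T$-periodicity of $K$ and $\rho$ turns the right-hand side into $\mu^\varphi_\infty(u) \, \d u$, so $\cL(\tau_{nT+\varphi} - (nT+\varphi)) = \mu^\varphi_\infty$ for every $n \geq 0$ with $nT + \varphi \geq s$.

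To conclude, I would note that $(\tau_{nT+\varphi} - (nT+\varphi))_n$ and $(X^\varphi_n)_n$ are both obtained by sampling the time-homogeneous PDMP $(\tilde X, \tilde \Phi)$ of Section~\ref{sec:pdmp} at intervals of length $T$; only the initial law differs. They therefore share the transition kernel $P_T(\cdot; (u, u+\varphi))$, and the constancy of the marginal for the first chain forces $\mu^\varphi_\infty \cdot P_T = \mu^\varphi_\infty$, which is the claim. The main obstacle I expect is the conditional-density identification in the second paragraph; this is clean once one observes that, conditional on $Y^{\nu, s}_t = y$, the future $(Y^{\nu, s}_u)_{u \geq t}$ has the same law as a fresh canonical process started from the Dirac mass at $y$ at time $t$, for which the first jump has density $K(\cdot, t-y)$ by definition.
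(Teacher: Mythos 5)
Your argument is correct, but it follows a genuinely different path from the paper's proof. The paper works analytically: it derives a Volterra integral equation \eqref{eq:integral equation pt} for the transition operator $\cP g$, solves it in resolvent form (Lemma~\ref{lem:resolvent form of pt}) using the function $r$, extracts the invariance criterion \eqref{eq:characterization invariant probability measures}, and finally checks that $\mu^\varphi_\infty$ satisfies it through three convolution identities ($\mu^\varphi_\infty(u+T) = \mu^\varphi_\infty(u) - (K^{[u],\varphi} * \rho^\varphi)(T)$, $\mu^\varphi_\infty = \rho^\varphi - K^\varphi * \rho^\varphi$, and $K^\varphi * \rho^\varphi = r^\varphi * \mu^\varphi_\infty$). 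You instead exhibit a stationary version of the chain directly: start the backward canonical process in the law $\nu^s_\infty$, invoke Proposition~\ref{prop:invariant-periodic-law-of-Y} to get $\cL(Y^{\nu^s_\infty,s}_t) = \nu^t_\infty$ for all $t$, disintegrate to find the forward recurrence time law, and observe that the $H$ factors cancel to leave exactly $\mu^t_\infty$. Since the chain $(\tau_{nT+\varphi}-(nT+\varphi))_n$ and $(X^\varphi_n)_n$ are both samplings of the same PDMP and hence share the kernel $P_T(\cdot;(u,u+\varphi))$, the constant marginal forces $\mu^\varphi_\infty P_T = \mu^\varphi_\infty$. Your route is shorter, more probabilistic, and directly explains the shape of $\mu^\varphi_\infty$ as ``forward recurrence time under backward stationarity,'' essentially turning the heuristic of Section~\ref{sec:pdmp} into a proof. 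What it does not produce is the explicit resolvent form of $\cP g$, which the paper reuses in the Lyapunov estimate of Lemma~\ref{lem:lyapunov}; so the paper's more computational proof earns its keep for the subsequent Harris argument, while yours is preferable if one only wants the invariance statement. One small technical point worth spelling out in a final write-up: the identification of the conditional law of the next jump uses that, given $Y^{\nu,s}_t=y$, the pre-jump intensity at $u>t$ equals $\lambda(u, t-y)$ regardless of whether $t-y$ is an actual jump time or predates $s$; this follows from \eqref{eq:canonical represention of Y} exactly as you say, but it deserves a sentence since $w=t-y$ need not be a renewal epoch.
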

The result follows from the following steps.
\subsubsection*{Transition probability of the Markov Chain}
The first step is to explicit the transition probability kernel of the Markov chain $(X^\varphi_n)$.
Let $g: \R_+ \rightarrow \R$ be a continuous and bounded test function. We write for all $t \geq 0$:
\[ \cP g(t) := \E [g(X^{\varphi}_{n+1}) \,|\, X^\varphi_n = t]. \]
Provided that $X^\varphi_n = t > T$, no jump occurs and $X^\varphi_{n+1} = t-T$ and so $\cP g(t) = g(t-T)$.
We therefore consider the case $t \in [0, T]$. Let:
\[ g^K(t) := \int_0^\infty {g(u) K(u+T+\varphi, \varphi + t) \d u}.\]
\begin{lemma}
	For $t \in [0, T]$, $\cP g$ solves the integral equation
	\begin{equation}
		\label{eq:integral equation pt}
		\cP g(t) = g^K(t) + \int_t^T{ K(\varphi + u, \varphi+t) \, \cP g (u) \d u}, \quad t \in [0, T]. 
	\end{equation}
\end{lemma}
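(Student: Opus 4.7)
The plan is to decompose $\mathcal{P}g(t) = \E[g(X^\varphi_{n+1}) \,|\, X^\varphi_n = t]$ by conditioning on the next two arrival times after $nT+\varphi$. Given $X^\varphi_n = t \in [0,T]$, the first arrival after $nT+\varphi$ is deterministically $\tau_1 = nT+\varphi+t$, and the subsequent arrival $\tau_2 = \tau_1 + \Delta$ has conditional inter-arrival density $K(\Delta+\varphi+t, \varphi+t)\,d\Delta$ by the definition of $K$ together with its $T$-periodicity. Setting $u = t + \Delta \in (t, \infty)$, this density reads $K(\varphi+u, \varphi+t)\,du$.

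I then split according to whether $u > T$ or $u \in (t, T]$. In the first case no arrival lies in $(\tau_1, (n+1)T+\varphi]$, so $\tau_2$ is already the first arrival strictly after $(n+1)T+\varphi$ and hence $X^\varphi_{n+1} = \tau_2 - (n+1)T - \varphi = u - T$. The substitution $w = u - T$ turns the resulting integral into
\[
\int_0^\infty g(w)\, K(\varphi+w+T, \varphi+t)\, dw = g^K(t).
\]
In the second case $\tau_2$ lies inside the window $[nT+\varphi, (n+1)T+\varphi]$, and the remaining task is to evaluate $\E[g(X^s_{(n+1)T+\varphi}) \,|\, \tau_2 = nT+\varphi+u]$.

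The main point is to identify this conditional expectation with $\mathcal{P}g(u)$, which I do through the PDMP $(\tilde X, \tilde\Phi)$ of Section~\ref{sec:pdmp}. Conditioning on $\tau_2 = nT+\varphi+u$ fixes the pre-jump state of the PDMP at $\tau_2$ to $(0, \varphi+u)$; after the jump the evolution continues for a further time $T-u$ before $\tilde X$ is sampled. The same post-jump state $(\Delta', \varphi+u+\Delta')$ arises if one starts the PDMP from $(u, \varphi+u)$ and runs it for time $T$, since the initial leg of length $u$ merely deterministically lowers $\tilde X$ from $u$ to $0$ while $\tilde\Phi$ stays at $\varphi+u$; in both scenarios one then performs exactly the same jump followed by the same remaining evolution of length $T-u$. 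Time-homogeneity of the PDMP thus gives equality in law of $\tilde X$ sampled at the end, and the latter scenario is by definition $\mathcal{P}g(u)$. Combining the two contributions yields the integral equation
\[
\mathcal{P}g(t) = g^K(t) + \int_t^T K(\varphi+u, \varphi+t)\,\mathcal{P}g(u)\, du, \quad t \in [0,T].
\]
The most delicate step is the identification of the conditional expectation with $\mathcal{P}g(u)$; the ingredients are the strong Markov property at $\tau_2$, the fact that the phase right after a jump is deterministically $\varphi+u$ (mod $T$), and time-homogeneity of the PDMP. Everything else is a direct computation.
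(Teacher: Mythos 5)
Your proof is correct and follows essentially the same route as the paper: condition on the size of the jump out of the first arrival after $nT+\varphi$, split according to whether the resulting $u = t+\Delta$ exceeds $T$, and for $u\le T$ use the time-homogeneity of the PDMP together with the deterministic decay of $\tilde X$ over the initial leg to recognize the conditional expectation as $\mathcal{P}g(u)$. The paper phrases that last step compactly as the identity $\E_{(\varphi+t,\Delta)} g(\tilde X_{T+\varphi}) = \E_{(\varphi,\Delta+t)} g(\tilde X_{T+\varphi})$, but this is exactly the observation you make about the deterministic initial leg.
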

\begin{proof}
	During this proof, we use the notation:
	\[\E_{(s, u)} g(\tilde{X}_{T+\varphi}) := \E[g(\tilde{X}_{T+\varphi}) \,|\, \tilde{X}_s = u, \tilde{\Phi}_s = s+u]. \]
	With this notation, it holds that $\cP g(t) = \E_{(\varphi, t)} g(\tilde{X}_{T+\varphi})$.
	Because $t \leq T$, $\tilde{X}$ starting with value $t$ at time $\varphi$ has a jump at time $t + \varphi$. Let $\Delta$ be the size of this jump.
	We have by the Markov property at time $\varphi+t$:
	\begin{align*}
		\cP g(t) &= \E_{(\varphi, t)} g(\tilde{X}_{T+\varphi}) =  \E \E_{(\varphi+t, \Delta)} g(\tilde{X}_{T+\varphi}) = \E \E_{(\varphi, \Delta+t)} g(\tilde{X}_{T+\varphi}) \\
		    &= \E \indic{\Delta > T-t} \E_{(\varphi, \Delta+t)} g(\tilde{X}_{T+\varphi}) + \E \indic{\Delta \leq T-t} \E_{(\varphi, \Delta+t)} g(\tilde{X}_{T+\varphi}) =: A + B.
	\end{align*}
Note that $A = \E (g(\Delta - (T-t)) 1_{\{\Delta > T-t\}} )$.
	As the law of $\Delta$ is 
	$\mathcal{L}(\Delta)(\d s) = K(s + t+\varphi, t+\varphi)\d s$,  \[ A = \int_{T-t}^\infty{ g(s - (T-t)) K(s + t + \varphi, t+\varphi) \d s} = g^K_t. \]
In addition, 
\[ B = \E \indic{\Delta \leq T-t} \cP g (\Delta+t)
	= \int_0^{T-t} K(s+t+\varphi, t+\varphi) \cP g (s+t) \d s.
\]
The change of variable $u = s+t$ gives the stated result.
\end{proof}
\subsubsection*{Resolution of the integral equation}
Recall that for $t \geq s$,  $r(t, s)$ is the solution of the Volterra integral equation
\begin{equation}
	\label{eq:volterra eq r and K}
	r(t, s) = K(t, s) + \int_s^t{ r(t, u) K(u, s)  \d u}.
\end{equation}
\begin{lemma}
	\label{lem:resolvent form of pt}
	It holds that for all $t \in [0, T]$,
	\[ \cP g(t) = g^K(t) + \int_t^T{ r(\varphi+u, \varphi+t) g^K(u) \d u}. \]
\end{lemma}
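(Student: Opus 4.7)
The plan is to identify $\cP g$ as the unique solution to the backward Volterra equation \eqref{eq:integral equation pt}, and then verify that the right-hand side $F(t) := g^K(t) + \int_t^T r(\varphi+u, \varphi+t) g^K(u) \d u$ solves the same equation. Uniqueness combined with this verification gives the identity.

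For uniqueness, I would observe that \eqref{eq:integral equation pt} is a linear Volterra integral equation on the bounded interval $[0,T]$ with a bounded kernel ($K \leq \lambda_{\max}$, since $K = \lambda H$ with $\lambda \leq \lambda_{\max}$ and $H \leq 1$). The associated integral operator $\Psi f(t) := \int_t^T K(\varphi+u,\varphi+t) f(u) \d u$ sends $C([0,T])$ into itself and iterating it $n$ times produces a kernel of size $O((\lambda_{\max}(T-t))^n/n!)$, so the Neumann series $\sum_{n \geq 0} \Psi^n$ converges in $C([0,T])$ and the solution is unique; this is just the standard Picard argument adapted to the backward direction (or reduced to the standard forward one by the substitution $\tau = T-t$).

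The main content of the proof is the verification that $F$ solves \eqref{eq:integral equation pt}. Substituting $F$ into the right-hand side of \eqref{eq:integral equation pt} gives
\[ g^K(t) + \int_t^T K(\varphi+u,\varphi+t) g^K(u) \d u + \int_t^T K(\varphi+u,\varphi+t) \int_u^T r(\varphi+v,\varphi+u) g^K(v) \d v \d u, \]
and applying Fubini to the last double integral (whose domain is $\{t \leq u \leq v \leq T\}$) rewrites it as
\[ \int_t^T g^K(v) \left( \int_t^v K(\varphi+u,\varphi+t) \, r(\varphi+v,\varphi+u) \d u \right) \d v. \]
Setting $a = \varphi+v$, $b = \varphi+t$ and changing variable $c = \varphi+u$, the bracketed quantity together with $K(\varphi+v,\varphi+t)$ equals
\[ K(a,b) + \int_b^a r(a,c) K(c,b) \d c, \]
which is precisely the first form of the Volterra identity for $r$ recalled in Proposition~\ref{prop:volterra int. eq. r} (eq.~\eqref{eq:volterra eq r and K}), hence equals $r(a,b) = r(\varphi+v,\varphi+t)$. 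Collecting the terms yields $g^K(t) + \int_t^T r(\varphi+v,\varphi+t) g^K(v) \d v = F(t)$, so $F$ satisfies \eqref{eq:integral equation pt} and $F = \cP g$.

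The only point requiring some care is matching the two forms of the Volterra equation for $r$: Proposition~\ref{prop:volterra int. eq. r} states both variants, and one must select the one in which the free variables (here the outer endpoints $\varphi+v$ and $\varphi+t$) appear as the fixed arguments of $r$ while the middle variable is shared by $K$ and the integral. With that matched, the computation closes on itself. Since the rest of the argument consists only of bounded Fubini and an affine change of variables, I do not expect any real obstacle beyond bookkeeping.
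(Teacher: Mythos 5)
Your proposal is correct and follows essentially the same approach as the paper: both identify $\cP g$ as the unique solution of the backward Volterra equation \eqref{eq:integral equation pt} and then verify, via Fubini and the resolvent identity \eqref{eq:volterra eq r and K} for $r$, that the proposed expression solves it. The only difference is that you spell out the uniqueness argument (Neumann series / Picard iteration on $[0,T]$), which the paper dispatches with ``by a classical argument.''
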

\begin{proof}
	We verify that $\tilde{p}(t) := g^K(t) + \int_t^T{ r(\varphi+u, \varphi+t) g^K(u) \d u}$ solves \eqref{eq:integral equation pt}. As by a classical argument, it holds that \eqref{eq:integral equation pt} has a unique solution, this proves the result. We have
\begin{align*}
	\int_t^T{K(\varphi+u, \varphi+t) \tilde{p}(u) \d u} &= \int_t^T{K(\varphi+u, \varphi+t) g^K(u) \d u} \\
						     & \quad + \int_t^T \int_u^T K(\varphi+u, \varphi+t) r(\varphi+\theta, \varphi+u) g^K(\theta) \d \theta \d u \\
						     &=: A+B.
\end{align*}
By Fubini and eq.~\eqref{eq:volterra eq r and K}, we have: 
\begin{align*}
B &= \int_t^T {g^K(\theta) \int_t^\theta  r(\varphi+\theta, \varphi+u) K(\varphi+u, \varphi+t) \d u\d \theta} \\
	&= \int_t^T { g^K(\theta) \left[ r(\varphi+\theta, \varphi+t) - K(\varphi+\theta, \varphi+t) \right] \d \theta}.
\end{align*}
Therefore, $A+B = \int_t^T r(\varphi+u, \varphi+t) g^K(u) \d u = \tilde{p}(t) - g^K(t)$ and so $\tilde{p}$ solves \eqref{eq:integral equation pt}.
\end{proof}
\subsubsection*{Final computations}
Let $\nu(u) \d u$ be a probability measure on $\R_+$ with density $\nu(u)$. We have
\begin{align*}
	& \int \cP g (u) \nu (\d u)  = \int_T^\infty{ g(u-T) \nu(u)\d u} + \int_0^T { \cP g(t) \nu(t) \d t} \\
				& \qquad  = \int_0^\infty { g(t) \nu(t+T) \d t} + \int_0^T { g^K(t) \nu(t) \d t} + \int_0^T{ \int_t^T {r(\varphi+u, \varphi+t)g^K(u) \d u }  \nu(t) \d t}.
\end{align*}
To simplify the computations, let $K^{[u], \varphi}(t,s) := K(u + \varphi+t, \varphi+s)$ and $r^\varphi(t, s) := r(t+\varphi, s+\varphi)$. We also use the notation
\[ (A*B)(t, s) = \int_s^t A(t, u) B(u, s) \d u \quad \text{ and } \quad (A*\nu)(t) = \int_0^t A(t, s) \nu( \d s). \]
Using Fubini, we have
\[  \int \cP g (u) \nu(u) \d u = \int_0^\infty{ g(u) \left[ \nu(u+T) + (K^{[u], \varphi}*\nu) (T) + (K^{[u], \varphi} * r^\varphi * \nu)(T) \right] \d u}. \]
This shows that $\nu$ is an invariant probability measure of $(X^\varphi_n)$ provided that 
\begin{equation}
	\label{eq:characterization invariant probability measures}
\nu(u+T) + (K^{[u], \varphi}*\nu) (T) + (K^{[u], \varphi} * r^\varphi * \nu)(T)  = \nu(u), \quad \forall u \geq 0. 
\end{equation}
Recall that $\rho(t)$ is the solution of \eqref{eq:global-rho}. We also define $\rho^\varphi(t) := \rho(\varphi+t)$ and $K^\varphi(t, s) := K(\varphi+t, \varphi+s)$. Recall that $\mu^\varphi_\infty$ is defined by \eqref{eq:def of mu infty varphi}:
\[ \mu^\varphi_\infty(u) = \int_{-\infty}^\varphi{ K(\varphi + u, \theta) \rho(\theta) \d \theta.} \]
\begin{lemma}
	The probability measure $\mu^\varphi_\infty(u) du$ satisfies:
	\begin{enumerate}
		\item $\mu^\varphi_\infty(u+T) = \mu^\varphi_\infty(u) - (K^{[u],\varphi} * \rho^\varphi)(T)$.
		\item $\mu^\varphi_\infty = \rho^\varphi - K^\varphi * \rho^\varphi$.
		\item $K^\varphi * \rho^\varphi = r^\varphi * \mu^\varphi_\infty$.
	\end{enumerate}
\end{lemma}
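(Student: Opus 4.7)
My plan is to prove the three identities in the order (2), (1), (3), since (1) and (3) both rest on the renewal identity established in (2).

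For item (2), I would unfold the renewal equation \eqref{eq:integral eq of rho:K} at the point $\varphi+u$ and split the integration domain at $\varphi$:
\[
	\rho(\varphi+u) = \int_{-\infty}^\varphi K(\varphi+u,\theta)\rho(\theta)\,\d\theta \;+\; \int_\varphi^{\varphi+u} K(\varphi+u,\theta)\rho(\theta)\,\d\theta.
\]
The first term is $\mu^\varphi_\infty(u)$ by definition \eqref{eq:def of mu infty varphi}. For the second, the substitution $\theta = \varphi+s$ together with the definitions of $K^\varphi$ and $\rho^\varphi$ converts it exactly into $(K^\varphi * \rho^\varphi)(u)$. Rearranging yields $\mu^\varphi_\infty = \rho^\varphi - K^\varphi * \rho^\varphi$.

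For item (1), the key tools are the $T$-periodicity of $K$ (built in) and of $\rho$ (since $\rho \in C(\T)$). Starting from $\mu^\varphi_\infty(u+T) = \int_{-\infty}^\varphi K(\varphi+u+T,\theta)\rho(\theta)\,\d\theta$, the identity $K(\varphi+u+T,\theta) = K(\varphi+u,\theta-T)$ followed by the change of variable $\theta \mapsto \theta+T$ (and periodicity of $\rho$) reduces the upper limit from $\varphi$ to $\varphi-T$, giving
\[
	\mu^\varphi_\infty(u) - \mu^\varphi_\infty(u+T) = \int_{\varphi-T}^\varphi K(\varphi+u,\theta)\rho(\theta)\,\d\theta.
\]
I would then show that this integral equals $(K^{[u],\varphi}*\rho^\varphi)(T)$ by expanding the convolution, applying $K(u+\varphi+T,\varphi+s) = K(u+\varphi,\varphi+s-T)$, and performing the substitution $\theta = \varphi+s-T$, once more invoking periodicity of $\rho$.

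For item (3), I plan to invoke the resolvent identity for Volterra equations. From \eqref{eq:volterra eq r and K}, the kernel $r$ satisfies $r = K + K*r$, and shifting both arguments by $\varphi$ gives $r^\varphi = K^\varphi + K^\varphi * r^\varphi$ on $\R_+$. The standard resolvent identity then asserts that any solution to $f = g + K^\varphi * f$ also satisfies $f = g + r^\varphi * g$. Applied to the equation $\rho^\varphi = \mu^\varphi_\infty + K^\varphi * \rho^\varphi$ from item (2), it produces $\rho^\varphi = \mu^\varphi_\infty + r^\varphi * \mu^\varphi_\infty$, and subtracting item (2) once more gives
\[
	r^\varphi * \mu^\varphi_\infty = \rho^\varphi - \mu^\varphi_\infty = K^\varphi * \rho^\varphi,
\]
as required. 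The main risk lies in item (1): coordinating the three shifts (the outer shift by $T$, the periodicity of $K$, and the periodicity of $\rho$) so that signs and endpoints line up. Beyond bookkeeping, the argument is purely algebraic, and since $\rho$ is bounded and continuous on $\T$, I do not foresee any analytic difficulty.
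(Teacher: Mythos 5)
Your proposal is correct and follows essentially the same route as the paper's proof: item (2) is the renewal equation split at $\varphi$, item (1) uses the joint $T$-periodicity of $K$ and $\rho$ with a translation of variables, and item (3) is the same algebraic fact. The only cosmetic difference is in item (3): you invoke the resolvent identity as a named lemma, whereas the paper substitutes item (2) into $r^\varphi * \mu^\varphi_\infty$ and factors using $r^\varphi - r^\varphi * K^\varphi = K^\varphi$ in one line — the latter avoids appealing to the resolvent principle as an external tool but is otherwise the same computation.
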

\begin{proof}
	First, we have:
	\begin{align*}
		\mu^\varphi_\infty(u+T) &= \int_{-\infty}^\varphi { K(\varphi+u+T, \theta) \rho(\theta) \d \theta} = \int_{-\infty}^\varphi { K(\varphi+u, \theta-T) \rho(\theta-T) \d \theta} \\
				     &= \int_{-\infty}^{\varphi-T} { K(\varphi+u, s) \rho(s) ds} = \mu^\varphi_\infty(u) - \int^{\varphi}_{\varphi-T} {K(\varphi+u, s) \rho(s) \d s}.
	\end{align*}
	Therefore, by the change of variables $\theta = s+ \varphi - T$, we obtain the first equality.
	Second, 
	\begin{align*}
		\mu^\varphi_\infty(u) &= \int_{-\infty}^\varphi { K(\varphi+u, \theta) \rho(\theta) \d \theta} \\
				  &= \int_{-\infty}^{\varphi+u}{K(\varphi+u, \theta) \rho(\theta) \d \theta} - \int^{\varphi+u}_{\varphi} {K(\varphi+u, \theta) \rho(\theta) \d \theta}\\
				  &= \rho(\varphi+u) - \int_0^u{ K(\varphi+u, s+\varphi)\rho(s+\varphi) \d s} \\
				  &= \rho^\varphi(u) - (K^\varphi * \rho^\varphi)(u).
	\end{align*}
Third, using the second point, we have $r^\varphi * \mu^\varphi_\infty = r^\varphi * \rho^\varphi - r^\varphi * K^\varphi * \rho^\varphi$. 
In addition, we have $r^\varphi = K^\varphi + r^\varphi * K^\varphi$, therefore $r^\varphi * \mu^\varphi_\infty = K^\varphi * \rho^\varphi$.
\end{proof}
	Altogether, $\mu^\varphi_\infty$ satisfies \eqref{eq:characterization invariant probability measures} and so $\mu^\varphi_\infty$ is an invariant probability measure of $(X^\varphi_n)$.
\subsection{Application of the Harris's ergodic theorem}
We now verify that $(X^\varphi_n)$ satisfies the assumptions of the Harris's ergodic theorem \cite{zbMATH06071108}.
This implies uniqueness of the invariant probability measure and convergence at an exponential rate in total variation norm.
Recall that $f(t) := e^{ \lambda_{\min} t /2}$. We also set
\[ \gamma := e^{-\lambda_{\min} T / 2} \quad \text{ and } \quad \kappa := 2 \left( \frac{\lambda_{\max}}{\lambda_{\min}} + \frac{\lambda^2_{\max}}{\lambda^2_{\min}}\right).  \]
\begin{lemma}
	\label{lem:lyapunov}
	The function $f$ is a Lyapunov function for $(X^\varphi_n)$ in the sense that \[ \forall t \geq 0, \quad \cP f(t) \leq \gamma f(t) + \kappa. \]
\end{lemma}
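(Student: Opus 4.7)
The proof splits into two regimes, $t > T$ and $t \in [0,T]$. In the first regime, the explicit formula $\cP g(t) = g(t - T)$ recalled above immediately gives $\cP f(t) = e^{\lambda_{\min}(t-T)/2} = \gamma f(t)$, so the desired inequality is trivial (with room $\kappa$ to spare). It therefore remains to establish the uniform bound $\cP f(t) \leq \kappa$ on $[0, T]$: since $\gamma f(t) \geq 0$ there, this will complete the proof.

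For $t \in [0, T]$, the idea is to use the resolvent representation from Lemma~\ref{lem:resolvent form of pt},
\[ \cP f(t) = f^K(t) + \int_t^T r(\varphi+u, \varphi+t)\, f^K(u)\, \d u, \]
where $f^K(t) = \int_0^\infty f(u)\, K(u+T+\varphi, \varphi+t)\, \d u$. The first step is a pointwise bound on $f^K$. Using $K(a,b) = \lambda(a,b)\, H(a,b) \leq \lambda_{\max} e^{-\lambda_{\min}(a-b)}$ with $a = u+T+\varphi$ and $b = \varphi+t$, one obtains
\[ f^K(t) \leq \lambda_{\max}\, e^{-\lambda_{\min}(T-t)} \int_0^\infty e^{-\lambda_{\min} u/2}\, \d u = \frac{2\lambda_{\max}}{\lambda_{\min}}\, e^{-\lambda_{\min}(T-t)}, \]
which in particular is bounded by $2\lambda_{\max}/\lambda_{\min}$ on $[0,T]$.

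For the integral term I would invoke the crude bound $r(\varphi+u, \varphi+t) \leq \lambda_{\max}$, which follows from the representation $r(t,s) = \E \lambda(t, t-Y^s_t)$ established in Section~\ref{sec:backward} combined with $\lambda \leq \lambda_{\max}$. Together with the sharper bound on $f^K(u)$ above and $\int_t^T e^{-\lambda_{\min}(T-u)}\, \d u \leq 1/\lambda_{\min}$, this yields $\int_t^T r(\varphi+u,\varphi+t)\, f^K(u)\, \d u \leq 2\lambda_{\max}^2/\lambda_{\min}^2$. Summing with the bound on $f^K(t)$ gives $\cP f(t) \leq 2\lambda_{\max}/\lambda_{\min} + 2\lambda_{\max}^2/\lambda_{\min}^2 = \kappa$, as desired.

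I do not anticipate a serious obstacle: the argument is a direct application of the resolvent formula together with the two-sided bounds on $\lambda$. The only point worth flagging is the uniform estimate $r \leq \lambda_{\max}$, which must be read off the probabilistic representation of $r$ rather than from the Volterra equation alone; otherwise one would have to iterate the Volterra equation, which would produce the same bound at the cost of extra bookkeeping.
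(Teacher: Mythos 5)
Your proof is correct and follows exactly the same route as the paper's: split at $t=T$, bound $f^K$ via $K(a,b)\le\lambda_{\max}e^{-\lambda_{\min}(a-b)}$, bound the resolvent term by $r\le\lambda_{\max}$, and sum to get $\kappa$. You simply spell out the intermediate integrals that the paper leaves implicit, which is a welcome addition.
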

\begin{proof}
	When $t > T$, $\cP f(t) = f(t-T) = \gamma f(t)$. When $t \in [0, T]$, it holds that
	\[ f^K_t := \int_0^\infty{ f(u) K(u+T+\varphi, \varphi+t) \d u} \]
	satisfies
	$f^K_t \leq \frac{2 \lambda_{\max}}{\lambda_{\min}} e^{-\lambda_{\min}(T-t)}$. 
We have $r(t, s) \leq \lambda_{\max}$. Therefore, by Lemma~\ref{lem:resolvent form of pt},
	\[ \sup_{t \in [0, T]} \cP f(t) \leq f^K_t + \int_t^T{ \lambda_{\max} f^K_u du} \leq \kappa.  \]
\end{proof}
Recall that the weighted total variation between two probability measures on $\R_+$ is $d^f_{TV}(\nu, \mu) = \int_{\R_+} { f(t) |\nu-\mu|(\d t)}$.
We denote by $\cP_*$ the adjoint operator of $\cP$, such that $\int \cP g(t) \nu (\d t) = \int g(t) \cP_* \nu (\d t)$, or equivalently:
\[  \mathcal{P}_* \nu = \mathcal{L}(X^\varphi_{n+1}), \quad \text{ provided that } \quad  \mathcal{L}(X^\varphi_n) = \nu. \]
\begin{lemma}
	For all $\nu, \mu \in \cP(\R_+)$, it holds that:
\[ d^f_{\text{TV}}(\mathcal{P}_*(\nu), \mathcal{P}_*(\mu))  \leq (\kappa + \gamma) d^f_{\text{TV}}(\nu, \mu). \]
\end{lemma}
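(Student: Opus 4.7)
The plan is to prove this Lipschitz bound by a standard duality argument, using Lemma~\ref{lem:lyapunov} as the essential input. Recall the dual characterization of the weighted total variation distance: for any two probability measures $\nu, \mu$ on $\R_+$,
\[ d^f_{\text{TV}}(\nu, \mu) = \sup\left\{\left| \int_{\R_+} g(t) (\nu - \mu)(\d t) \right| \,:\, g \text{ measurable},\ |g| \leq f \right\}. \]
Applying this to $\cP_* \nu$ and $\cP_* \mu$ and using the defining adjoint property $\int \cP g \, \d \nu = \int g \, \d(\cP_* \nu)$, we obtain
\[ d^f_{\text{TV}}(\cP_* \nu, \cP_* \mu) = \sup_{|g| \leq f} \left| \int_{\R_+} \cP g(t)\, (\nu-\mu)(\d t) \right|. \]

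The crux is then a pointwise bound on $\cP g$ whenever $|g| \leq f$. First, by linearity and positivity of $\cP$ (it is a Markov kernel), $|g| \leq f$ forces $|\cP g| \leq \cP f$ pointwise. Second, Lemma~\ref{lem:lyapunov} gives $\cP f(t) \leq \gamma f(t) + \kappa$ for all $t \geq 0$. Since $f(t) = e^{\lambda_{\min} t /2} \geq 1$ on $\R_+$, the constant $\kappa$ can be absorbed as $\kappa \leq \kappa f(t)$, yielding $|\cP g(t)| \leq (\gamma + \kappa) f(t)$.

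Plugging this back into the dual representation,
\[ \left| \int_{\R_+} \cP g \,(\nu - \mu)(\d t) \right| \leq \int_{\R_+} |\cP g(t)| \,|\nu - \mu|(\d t) \leq (\gamma + \kappa) \int_{\R_+} f(t) |\nu - \mu|(\d t) = (\gamma + \kappa)\, d^f_{\text{TV}}(\nu, \mu), \]
and taking the supremum over admissible $g$ completes the argument.

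The only minor technical point I would be careful with is the justification of the dual formula when $f$ is unbounded: one should either invoke the fact that $\int f \,\d(\nu + \mu)$ is assumed finite (otherwise both sides are trivially infinite and the inequality holds) or, equivalently, work with truncations $f \wedge n$ and pass to the limit by monotone convergence. Beyond that caveat the proof is routine, and the main conceptual content is entirely contained in the Lyapunov estimate already established in Lemma~\ref{lem:lyapunov}; the restriction $f \geq 1$ on $\R_+$ is what allows the additive constant $\kappa$ to be converted into a multiplicative factor on $f$.
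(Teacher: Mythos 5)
Your proof is correct and fills in the details that the paper leaves implicit (the paper's proof is simply ``This follows from Lemma~\ref{lem:lyapunov} above''); the duality argument combined with the pointwise bound $\cP f \leq \gamma f + \kappa \leq (\gamma+\kappa)f$ (using $f \geq 1$) is exactly the intended deduction, and your caveat about the unbounded-$f$ case is handled correctly by noting the inequality is trivial when the right side is infinite.
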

\begin{proof}
	This follows from Lemma~\ref{lem:lyapunov} above.
\end{proof}
\begin{lemma}
	Assume that $T$ is large enough such that $\gamma = e^{-\lambda_{\min}T / 2} \leq \frac{1}{2}$ and $T > \frac{2 \log(8 \kappa)}{\lambda_{\min}}$. 
	Then for $\nu(\d t) = \lambda_{\max} e^{-\lambda_{\max}t} \d t$, it holds that for some $\alpha > 0$,
	\[
		\inf_{\{ u \geq 0 : f(u) \leq \frac{4 \kappa}{1-\gamma} \} } \P(X^\varphi_{n+1} \in A~ |~ X^\varphi_n = u) \geq \alpha \nu(A).
	\]
\end{lemma}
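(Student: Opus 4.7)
The plan is to build an explicit minorization by restricting to the simplest possible trajectory of the underlying renewal process over one period: exactly one arrival in the period, followed by no further arrival until its end. On this event the transition admits an explicit density, and the two-sided control $\lambda_{\min} \leq \lambda \leq \lambda_{\max}$ gives the required lower bound by an exponential density.

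\textbf{Identifying the sublevel set.} Since $f(u) = e^{\lambda_{\min} u/2}$, the set $\{u \geq 0 : f(u) \leq 4\kappa/(1-\gamma)\}$ is the interval $[0, M]$ with $M = (2/\lambda_{\min}) \log(4\kappa/(1-\gamma))$. Combining $\gamma \leq 1/2$ (so $1-\gamma \geq 1/2$) with the assumed lower bound $T > 2\log(8\kappa)/\lambda_{\min}$ yields $M \leq (2/\lambda_{\min}) \log(8\kappa) < T$. This strict inequality is crucial: as soon as $u > T$ no jump takes place in one period and the chain is purely deterministic ($\cP g(u) = g(u-T)$), so no Doeblin-type minorization by a diffuse measure could hold there.

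\textbf{The explicit lower bound.} For $u \in [0, M] \subset [0, T]$, given $X^\varphi_n = u$ the first arrival after $nT + \varphi$ is deterministically at $\tau_0 := nT+\varphi+u \in (nT+\varphi, (n+1)T+\varphi]$. I restrict to the event $\{\tau_1 > (n+1)T+\varphi\}$ that no further arrival occurs in the same period; on this event $X^\varphi_{n+1} = \tau_1 - (n+1)T - \varphi$. Since the conditional density of $\tau_1$ given $\tau_0$ is $K(\cdot, \tau_0)$, the change of variable $v = \tau_1 - (n+1)T - \varphi$ together with the periodicity $K(t+T, s+T) = K(t,s)$ gives, for every Borel $A \subset \R_+$,
\begin{equation*}
	\P(X^\varphi_{n+1} \in A \mid X^\varphi_n = u) \geq \int_A K(T+\varphi+v, \varphi+u) \, \d v.
\end{equation*}
Writing $K = \lambda H$ with $\lambda \geq \lambda_{\min}$ and using $H(t,s) = \exp(-\int_s^t \lambda(\theta, s) \d \theta) \geq e^{-\lambda_{\max}(t-s)}$, the integrand is bounded below by $\lambda_{\min} e^{-\lambda_{\max}(T+v-u)} \geq \lambda_{\min} e^{-\lambda_{\max} T} e^{-\lambda_{\max} v}$, where the last step uses only $u \geq 0$. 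This is $\alpha$ times the density of $\nu$ with $\alpha := (\lambda_{\min}/\lambda_{\max}) e^{-\lambda_{\max} T} > 0$, which closes the argument. The only real subtlety is the bookkeeping in Step~2: without the quantitative lower bound on $T$ in the hypothesis, the sublevel set could extend into the deterministic regime $u > T$ and no such minorization would exist; once $M < T$ is established, the remainder is a direct application of the standing bounds on $\lambda$.
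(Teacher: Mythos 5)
Your proposal is correct and follows essentially the same route as the paper: both arguments deduce from the Lyapunov bound that the sublevel set lies in $[0,T)$, force a deterministic jump within the period, restrict to the event that the next inter-arrival overshoots the end of the period, and bound the resulting explicit kernel density from below by $\lambda_{\min} e^{-\lambda_{\max}(T+t-u)}$ to get $\alpha = (\lambda_{\min}/\lambda_{\max})e^{-\lambda_{\max}T}$. The paper phrases this through the PDMP $(\tilde X,\tilde\Phi)$ and its jump $\Delta$ while you work directly with the arrival times $\tau_0,\tau_1$, but that is a cosmetic difference only.
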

\begin{proof}
	Let $u \geq 0$ such that $f(u) \leq \frac{4 \kappa}{1-\gamma} \leq 8 \kappa$. So, $e^{\lambda_{\min} u  /2 } \leq 8\kappa$ 
	and so $u \leq \frac{2 \log(8 \kappa)}{\lambda_{\min}} < T$. Therefore, the process $(\tilde{X}, \tilde{\Phi})$, starting at time $\varphi$ with values $(u, u+\varphi)$ has a jump at time $\varphi+u<\varphi+T$. Let $\Delta$ be the size of the jump. It holds that
	\begin{align*}  \P(\tilde{X}_{T+\varphi} \in \d t ~| ~\tilde{X}_\varphi = u, \tilde{\Phi}_{\varphi} = u + \varphi) & \geq \P(\Delta \in \d t + T-u) \\
		& = K(u+\varphi+ (t+T-u), u+\varphi) \d t \\
		&\geq \lambda_{\min} e^{-\lambda_{\max} (t+T-u)}\d t. 
	\end{align*}
	This gives the stated result with $\alpha = \frac{\lambda_{\min}}{\lambda_{\max}} e^{-\lambda_{\max}T}$.
\end{proof}
Finally, we give the proof of Theorem~\ref{th:backward}.
Note that without loss of generality, we can assume that $T > \tfrac{2 \log(8 \kappa)}{\lambda_{\min}}$
and that $\gamma = e^{-\lambda_{\min}T / 2} \leq \frac{1}{2}$ (otherwise, replace $T$ by $pT$ for some $p \in \mathbb{N}$ large enough). Let $t \geq s$ be fixed. Write $t = nT+\varphi$, $\varphi \in [0, T)$. Let $n_0$ be the smallest integer such that $n_0 T + \varphi > s$.
In view of the previous lemma, the Harris's ergodic theorem \cite[Th. 1.2]{zbMATH06071108} applies, and so there exists $C > 0$ and $\theta \in (0, 1)$ such that
\[ \forall n \geq n_0, \quad  d^f_{TV}(\cL(X^s_{nT+\varphi}), \nu^\varphi_\infty) \leq C \theta^{n-n0} d^f_{TV}(\cL(X^s_{n_0T+\varphi}), \mu^\varphi_\infty).  \]
We have $\cL(X^s_s) = K(s+\cdot, s)$. As $ \sup_{s,\varphi \in \R} \sup_{u \geq 0} [K(s+u, s) + \nu^\varphi_\infty(u)]  e^{\lambda_{\min} u} < \infty$, there exists a constant $C$ independent of $s$ and $\psi$ such that 
\[ d^f_{TV}(\cL(X^s_s), \mu^\varphi_\infty) \leq C. \]
In addition, as $n_0T + \varphi - s < T$, there exists another constant $C$, independent of $s$ such that
\[ d^f_{TV}(\cL(X^s_{n_0T+\varphi}),  \cL(X^s_s)) \leq C. \]
Therefore, we have for another constant $C$:
\[ \forall n \geq n_0, \quad \quad  d^f_{TV}(\cL(X^s_{nT+\varphi}), \nu^\varphi_\infty) \leq C \theta^{n-n0}. \] 
The conclusion of Theorem~\ref{th:backward} easily follows. \qed

\bibliographystyle{abbrvnat}
\bibliography{biblio}
\end{document}